\newcommand{\mbbz}{\mathbb{Z}}
\newcommand{\cA}{\mathcal{A}}
\newcommand{\beq}{\begin{equation}}
\newcommand{\eeq}{\end{equation}}
\newcommand{\alns}[1]{\begin{align*}#1\end{align*}}
\newcommand{\aln}[1]{\begin{align} #1 \end{align}}
\newcommand{\been}{\begin{enumerate}}
\newcommand{\een}{\end{enumerate}}
\newcommand{\bbn}{\mathbb{N}}
\DeclareMathOperator{\prob}{\mathbb{P}}
\DeclareMathOperator{\exptn}{\mathbb{E}}
\DeclareMathOperator{\card}{Card}
\newcommand{\eqd}{\stackrel{d}{=}}
\newtheorem{thm}{Theorem}[section]
\newtheorem{propn}[thm]{Proposition}
\newtheorem{lemma}[thm]{Lemma}
\theoremstyle{remark}
\theoremstyle{definition}
\title{Branching processes with immigration\\
in atypical random environment}
\author{Sergey Foss}
\address{Actuarial Mathematics and Statistics, Heriot-Watt University, Edinburgh EH14 4AS, Scotland, UK
and MCA, Novosibirsk State University, Russia.}
\email{S.Foss@hw.ac.uk}
\author{Dmitry Korshunov}
\address{Lancaster University, UK and Novosibirsk State University, Russia.}
\email{d.korshunov@lancaster.ac.uk}
\author{Zbigniew Palmowski}
\address{Faculty of Pure and Applied Mathematics,
Wroc\l aw University of Science and Technology,
Wyb. Wyspia\'nskiego 27, 50-370 Wroc\l aw, Poland}
\email{zbigniew.palmowski@gmail.com}
\thanks{The work of S Foss and D Korshunov is partially supported
by the RFBR grant 19-51-53010 (2019-2020).
The work of Z Palmowski is partially supported by the Polish National
Science Centre under the grant 2018/29/B/ST1/00756 (2019-2022).}
\date{\today}
\subjclass[2010]{60J70,60G55,60J80}
\keywords{}
\begin{document}

\begin{abstract}
Motivated by a seminal paper of \cite{kesten:kozlov:spitzer:1975}
we consider a branching process with a geometric offspring distribution
with i.i.d.\ random environmental parameters $A_n$, $n\ge 1$ and size-1 immigration 
in each generation.
In contrast to above mentioned paper we assume that the environment is long-tailed,
that is that the distribution $F$ of $\xi_n:=\log((1-A_n)/A_n)$ is long-tailed.
We prove that although the offspring distribution is light-tailed,
the environment itself can produce extremely heavy tails of the  distribution
of the population size in the $n$th generation which becomes even heavier
with increase of $n$.
More precisely, we prove that, for all $n$, the  distribution tail
$\prob(Z_n \ge m)$ of the $n$th population size $Z_n$ is asymptotically
equivalent to $n\overline F(\log m)$ as $m$ grows.
In this way we generalise \cite{BP} who proved this result in the case $n=1$
for regularly varying environment $F$ with parameter $\alpha>1$.

Further, for a subcritical branching process with subexponentially
distributed $\xi_n$, we provide asymptotics for the distribution
tail $\prob(Z_n>m)$ which are valid uniformly for all $n$, and also for the
stationary tail distribution. Then we establish the
``principle of a single atypical environment'' which says that the main
cause for the number of particles to be large is a presence
of a single very small environmental parameter $A_k$.
\vspace{3mm}

\noindent{\bf Key words and phrases:}
{branching process, random environment, random walk in random environment,
subexponential distribution, slowly varying distribution}\\
\end{abstract}

\maketitle

\pagestyle{myheadings}
\markboth{\sc  S.\ Foss --- D.\ Korshunov --- Z.\ Palmowski}
{\sc Branching process in atypical environment}

\section{Introduction and main results}

Branching processes considered in this paper are motivated by works of
\cite{solomon:1975} and \cite{kesten:kozlov:spitzer:1975},
who analysed a neighbourhood random walk in random environment.
This is a random walk $(X_t, t\in\mbbz^+)$ on $\mbbz$ defined in the following way.
Consider a collection $(A_i, i \in \mbbz^+)$ of i.i.d. $(0,1)$-valued random
variables. Let $\cA$ be the $\sigma$-algebra generated by $(A_i,i\in\mbbz^+)$.
Let $(X_k, k \in \bbn)$ be a random walk in random environment,
that is a collection of $\mbbz$-valued random variables such that $X_0=0$ and, for $k\ge 0$, 
\alns{
\prob \bigl( X_{k+1} = X_k + 1 \mid \cA, X_0 = i_0, \ldots, X_k = i_k \bigr) = A_{i_k}
}
and
\alns{
\prob \bigl( X_{k+1} = X_k -1 \mid \cA, X_0 = i_0, \ldots, X_k = i_k\bigr) = 1- A_{i_k}
}
for all $i_j \in \mbbz$, $0\le j\le k$.
The collection $(A_i,i\in\mbbz^+)$ is called a random environment.

For this random walk, \cite{kesten:kozlov:spitzer:1975}
studied the appropriately scaled limiting distribution of the hitting time
$T_n=\inf\{k > 0: X_k =n\}$ of any state state $n\in\mbbz$. 
Their analysis is based on the representation of $T_n$, $n>0$ in terms of the total 
number of particles up to the $n$th generation of a certain branching
process in random environment with size-1 immigration at each
generation step. In this model the offspring distribution in the $n$th
generation is geometric with a random parameter $A_n$.

In other words, let $(Z_n, n \ge 0)$ be a branching process in random environment
with one immigrant each time that starts from $Z_0 \equiv 0$.
Then the following representation holds:
\aln{
Z_{n+1} = \sum_{i=1}^{Z_n + 1} B_{n+1,i}\label{Zn}
}
where, conditioned on $\cA$,  $(B_{n+1,i},i\ge 1)$ are independent copies of
a geometric random variable $B_{n+1}$ with probability mass function
\aln{
\prob (B_{n+1}=k) = A_n(1-A_n)^k
\quad \mbox{ for all } k \ge 0,\ n\ge 0. \label{eq:prob:mass:func:Bn}
}
Following \cite{kesten:kozlov:spitzer:1975},
let $U_i^n$ denote the number of transitions of $(X_k,k \ge 0)$ from $i$ to $i-1$
within time interval $[0,T_n)$, i.e.,
\alns{
U_i^n = \card \{ k < T_n : X_k = i, X_{k+1}= i-1\},
}
where $\card(C)$ is the cardinality of the set $C$. It is easy to derive that
\begin{equation}
T_n = n + 2 \sum_{i = -\infty}^{\infty} U_i^n.\label{mainrepr}
\end{equation}
Note that $U_i^n = 0$ for all $i \ge n$ and $U:=\sum_{i \le 0} U_i^n < \infty$
a.s. if $X_k \to \infty$ a.s. as $k \to \infty$.
It has been established in \cite{kesten:kozlov:spitzer:1975}, that
\begin{equation}
\sum_{i=1}^n U_i^n\ \eqd\ \sum_{l=0}^{n-1} Z_l.\label{disteq}
\end{equation}
Then \cite{kesten:kozlov:spitzer:1975}
have analysed $T_n$ under the so-called ``Kesten assumptions'' on the environment: 
\aln{
\exptn \biggl( \log \frac{1-A}{A} \biggr) < 0\quad \mbox{ but }\
\exptn \biggl(\frac{1-A}{A} \biggr) \ge 1 \label{eq:assumption:solomon}
}
and there exists a unique positive solution $\kappa$ to the equation
\aln{
\exptn \bigg(\biggl(\frac{1-A}{A}\bigg)^\kappa\biggr)
\ =\ \exptn \bigg( \exp \bigg\{ \kappa \log \frac{1-A}{A} \bigg\} \bigg)
\ =\ 1. \label{eq:assumption:kks}
}
In particular, the assumption \eqref{eq:assumption:kks} implies that
the random variable
$$
\xi:=\log\frac{1-A}{A}
$$
has an exponentially decaying right tail.
It was shown in \cite{kesten:kozlov:spitzer:1975} that, under the assumptions 
\eqref{eq:assumption:solomon}--\eqref{eq:assumption:kks},
the distributions of appropriately scaled random variables $T_n$ and $\sum_{k=0}^{n-1} Z_k$ become close to each other and converge, as $n\to\infty$, to the distribution of 
a $\kappa$-stable random variable.

The tail asymptotics for the branching process $Z_n$ under the assumptions 
\eqref{eq:assumption:solomon}--\eqref{eq:assumption:kks} were studied by
\cite{DS2017} for all three regimes, subcritical,
critical, and supercritical. 

The aim of our paper is to study the asymptotic behaviour of the branching
process $Z_n$ under the complementary assumption that the distribution $F$ of
the random variable $\xi$ is {\it long-tailed}, that is, $\overline{F}(x)>0$
for all $x$ and
\begin{eqnarray}\label{assregvar}
\overline F(x-y) &\sim& \overline F(x) \quad\mbox{as }x\to\infty,
\end{eqnarray}
for some (and therefore for all) fixed $y\not=0$. Here $\overline F(x)=1-F(x)$
is the tail distribution function and equivalence \eqref{assregvar} means that
the ratio of the left- and right-hand sides tends to 1 as $x$ grows.
In particular, \eqref{assregvar} implies that $F$ is {\it heavy-tailed}, 
i.e. ${\mathbb E} e^{c\xi}=\infty$ for all $c>0$.
Given \eqref{assregvar}, the distribution $G$ defined by its tail as
$\overline G(x)=\overline F(\log x)$, $x\ge 1$, 
is {\it slowly varying at infinity} and therefore {\it subexponential}, that is,
\begin{eqnarray}\label{asssubexp}
\overline{G*G}(x) &\sim& 2\overline G(x)
\quad\mbox{as }x\to\infty,
\end{eqnarray}
see, e.g. Theorem 3.29 in \cite{FKZ}.

A distribution $F$ with finite mean is called {\it strong subexponential} if
\begin{eqnarray}\label{asssubexp.str}
\int_0^x \overline F(x-y)\overline F(y)dy &\sim&
2\overline F(x)\int_0^\infty\overline F(y)dy
\quad\mbox{as }x\to\infty.
\end{eqnarray}
Any strong subexponential distribution $F$ is subexponential, and  its
{\it integrated tail distribution} $F_I$ with the tail distribution function
\begin{eqnarray*}
\overline F_I(x) &=& \min\Bigl(1,\ \int_x^\infty\overline F(y)dy\Bigr).
\end{eqnarray*}
is subexponential too
(see e.g. \cite[Theorem 3.27]{FKZ}).
In what follows, we write $F_I(x,y]:=\overline F_I(x)-\overline F_I(y)$.

We start now with our first main result.

\begin{thm}\label{thm:tail:fixed:generation:size}
Under the assumption \eqref{assregvar},
\alns{
\prob(Z_1> m)\ \sim\ \overline F(\log m)\quad\mbox{as }m\to\infty.
}
If, in addition, the distribution $F$ is subexponential, then, for any fixed $n\ge 2$,
\alns{
\prob(Z_n> m)\ \sim\ n\overline F(\log m)\quad\mbox{as }m\to\infty.
}
\end{thm}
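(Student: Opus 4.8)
The plan is to analyse the recursion \eqref{Zn} by conditioning on the environment $\cA$ and exploiting the exact law of the geometric offspring. The base case $n=1$ is the cleanest: since $Z_0\equiv 0$, we have $Z_1=B_{1,1}\eqd B_1$, and conditionally on $A_0$ the variable $B_1$ is geometric with parameter $A_0$, so $\prob(B_1\ge k\mid A_0)=(1-A_0)^k$. Writing $\xi_0=\log((1-A_0)/A_0)$, one gets $(1-A_0)=e^{\xi_0}/(1+e^{\xi_0})$, hence $\prob(Z_1\ge m\mid\xi_0)=\bigl(1+e^{-\xi_0}\bigr)^{-m}$. Taking expectations and splitting according to whether $\xi_0$ is above or below, say, $\log m - \log\log m$, the contribution from small $\xi_0$ is negligible (there $(1+e^{-\xi_0})^{-m}$ decays superpolynomially in any relevant scale), while on the large-$\xi_0$ event $(1+e^{-\xi_0})^{-m}\approx e^{-m e^{-\xi_0}}\to\mathbbm{1}\{e^{-\xi_0}=o(1/m)\}$, i.e. roughly $\mathbbm{1}\{\xi_0>\log m\}$. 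Dominated convergence together with the long-tailed property \eqref{assregvar} (which controls the boundary region where $\xi_0-\log m$ is bounded) then yields $\prob(Z_1\ge m)\sim\overline F(\log m)$.

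For $n\ge 2$ I would argue by induction on $n$, establishing both an upper and a lower bound. The heuristic, which should drive the proof, is the ``principle of a single big jump'': $Z_n$ is large essentially because exactly one of the environments $A_0,\dots,A_{n-1}$ is atypically small (i.e. one $\xi_k$ is atypically large), and there are $n$ choices for which one, giving the factor $n$.

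For the \textbf{lower bound}: condition on $\cA$ and use the branching structure. If $\xi_k$ is large for some $k\le n-1$, then $Z_{k+1}$ is already of order $e^{\xi_k}$ by the $n=1$ analysis applied at generation $k$ (more precisely, $Z_{k+1}\ge B_{k+1,1}$ which is geometric with parameter $A_k$); and once $Z_{k+1}$ is large, the process cannot collapse too much in the remaining $n-1-k$ generations of bounded typical environment — indeed $Z_{j+1}=\sum_{i=1}^{Z_j+1}B_{j+1,i}\ge$ (a binomial-type sum) stays comparable to $Z_{k+1}$ with probability bounded away from $0$, and one can even get $\prob(Z_n\ge m\mid Z_{k+1}\ge Cm)\to 1$ by a law-of-large-numbers argument conditionally on $\cA$, since $\exptn(B_{j+1}\mid A_j)=(1-A_j)/A_j=e^{\xi_j}\ge$ something. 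Summing over the $n$ disjoint-up-to-negligible-overlap events $\{\xi_k\text{ large}\}$ and using $\overline F(\log(Cm))\sim\overline F(\log m)$ from \eqref{assregvar} gives the lower bound $\gtrsim n\overline F(\log m)$.

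For the \textbf{upper bound}: write $Z_n=\sum_{i=1}^{Z_{n-1}+1}B_{n,i}$. On the event that $\xi_{n-1}$ is not large (below $\epsilon\log m$, say), each $B_{n,i}$ is stochastically small, so a large $Z_n$ forces either $Z_{n-1}$ itself to be large — handled by the induction hypothesis, contributing $(n-1)\overline F(\log m)$ — or an atypically large fluctuation in the sum $\sum_{i=1}^{Z_{n-1}+1}B_{n,i}$ given a moderate $Z_{n-1}$, which should be shown negligible by a Chernoff/Markov estimate (the conditional m.g.f. of $B_n$ is finite away from the boundary since, given $\xi_{n-1}$ bounded, $B_n$ is genuinely geometric hence light-tailed). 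On the complementary event $\{\xi_{n-1}>\epsilon\log m\}$, which has probability $\overline F(\epsilon\log m)$: this is where the work is. One needs $\prob(Z_n\ge m,\ \xi_{n-1}\in(y,y+dy))$ for $y$ in the bulk region around $\log m$ to integrate up to $\overline F(\log m)$. Conditionally on $\xi_{n-1}=y$ with $y$ large, $Z_n$ given $Z_{n-1}$ is a sum of $Z_{n-1}+1$ i.i.d. geometrics of mean $e^{y}$; since $Z_{n-1}$ is tight on this scale (it does not need to be large), $Z_n\approx (Z_{n-1}+1)e^{y}$, so $\{Z_n\ge m\}\approx\{\xi_{n-1}\ge\log m-\log(Z_{n-1}+1)\}$, and integrating against the (tight, independent-of-$m$ in the limit) law of $Z_{n-1}$ together with \eqref{assregvar} gives exactly $\overline F(\log m)$ — the logarithmic shift $\log(Z_{n-1}+1)$ washes out by long-tailedness. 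Adding the two regimes gives the upper bound $\lesssim n\overline F(\log m)$.

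\textbf{Main obstacle.} The delicate point is the boundary region in the $n\ge2$ upper bound: uniformly controlling $\prob(Z_n\ge m\mid\xi_{n-1}=y)$ for $y$ ranging over a window around $\log m$, where neither $Z_{n-1}$ nor the geometric sum is negligible and the answer depends on the interplay between the (tight but nondegenerate) distribution of $Z_{n-1}$ and the conditional law of the offspring sum. One must show that the random logarithmic correction coming from $\log(Z_{n-1}+1)$, and the fluctuations of the conditionally-geometric sum, do not distort the asymptotics — this is exactly where the long-tailedness \eqref{assregvar} (and, for the genuine subexponential case of compositions over several generations, subexponentiality itself) is used, via a single-big-jump decomposition. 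Making the ``tightness of $Z_{n-1}$ on the logarithmic scale'' precise and uniform enough to pass to the limit is the crux; a clean way to organize it is to prove the stronger statement that $(\log(Z_{n}+1))$, suitably interpreted, satisfies a max-type recursion whose tail is governed by $\sum_{k=0}^{n-1}\overline F$, but the direct two-bound induction sketched above should suffice.
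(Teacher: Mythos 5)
Your overall route (conditioning on the environment, induction in $n$, single-big-jump heuristic) is the same as the paper's, and your lower bounds for $n=1$ and $n\ge 2$ can be made rigorous. The genuine gap is in the upper bound for $n\ge 2$: the split according to whether $\xi_{n-1}\le\varepsilon\log m$ or not cannot deliver the claimed bounds. In the regime $\xi_{n-1}\le\varepsilon\log m$ the conditional mean of each $B_{n,i}$ may be as large as $m^{\varepsilon}$, so ``$Z_{n-1}$ large'' must mean $Z_{n-1}\gtrsim m^{1-\varepsilon}$ (with $Z_{n-1}\approx m^{1-\varepsilon}$ and $e^{\xi_{n-1}}\approx m^{\varepsilon}$ the event $\{Z_n>m\}$ needs no atypical fluctuation at all, so no Chernoff/Markov estimate kills it); the induction hypothesis then only yields $\prob(Z_{n-1}\gtrsim m^{1-\varepsilon})\sim(n-1)\overline F((1-\varepsilon)\log m)$, which for a general long-tailed, even subexponential, $F$ is \emph{not} $O(\overline F(\log m))$: for $\overline F(x)=e^{-\sqrt x}$ the ratio $\overline F((1-\varepsilon)\log m)/\overline F(\log m)$ tends to infinity. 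Long-tailedness makes $\overline G(x)=\overline F(\log x)$ slowly varying, i.e.\ insensitive to constant factors of $m$, but not to powers $m^{1-\varepsilon}$.

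Symmetrically, in the regime $\xi_{n-1}>\varepsilon\log m$ your assertion that $Z_{n-1}$ is ``tight, independent of $m$ in the limit'' and that the shift $\log(Z_{n-1}+1)$ ``washes out by long-tailedness'' is false: $\log(Z_{n-1}+1)$ has tail $\sim(n-1)\overline F$, so $\{Z_n>m\}$ is essentially $\{\xi_{n-1}+\log(Z_{n-1}+1)>\log m\}$, a sum of two independent variables each with tail of order $\overline F$; obtaining the constant $n$ --- in particular showing that the region where both terms are moderately large contributes $o(\overline F(\log m))$ --- is exactly where subexponentiality of $F$ must be invoked, and your decomposition never sets up that convolution estimate. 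The paper instead splits according to the value $k$ of the previous generation into $k\le K$, $K<k\le\varepsilon m$, $k>\varepsilon m$: the range $k\le K$ is handled via $\exptn A^j(1-A)^m=o(\overline F(\log m))$, the range $k>\varepsilon m$ via the induction hypothesis (only a constant-factor shift appears there, so long-tailedness suffices), and the middle range via a Chernoff bound for the negative binomial followed by the subexponential estimate $\prob\bigl(\xi_1+\xi_2>\log m,\ \xi_2\in(\log K,\log m-\log(1/\varepsilon)]\bigr)=o(\overline F(\log m))$. A smaller, fixable hand-wave of the same nature occurs in your $n=1$ upper bound: on the strip $\xi_0\in(\log m-\log\log m,\log m-C]$ the ``sup times mass'' bound involves $\overline F(\log m-\log\log m)$, which need not be $O(\overline F(\log m))$ under long-tailedness alone; one needs a layered decomposition together with the quantitative inequality $\overline F(x-y)\le\gamma e^y\overline F(x)$, as in the paper's Lemma~\ref{l:1.exp}.
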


Theorem \ref{thm:tail:fixed:generation:size} shows that the tail of
$Z_1$ is surprisingly heavy and is getting heavier in each next  generation.
What should be underlined, this type of behaviour is a consequence of the environment only, and not of the
branching mechanism which is of geometric type.
In contrast to a  series of papers
\cite{seneta}, \cite{darling}, \cite{schuh:barbour}, \cite{hong:zhang:2018},  
we do not analyse the convergence results for $n\to\infty$, 
with focusing on the tail behaviour of the distribution of $Z_n$ for each $n$.

Consider now a branching process with state-independent immigration satisfying the stability condition
\begin{eqnarray}\label{stab.cond}
-a\ :=\ \exptn \xi < 0\quad \mbox{where }\exptn |\xi|<\infty.
\end{eqnarray}
The classical Foster criterion implies that the distribution of $Z_n$
stabilises in time, i.e.\ the distribution of the Markov chain $Z_n$ 
converges to a unique limiting/stationary distribution as $n$ grows.
It follows from Theorem \ref{thm:tail:fixed:generation:size}
that, for any $n$, the tail of the stationary distribution must be
asymptotically heavier than $n\overline{F}(\log m)$,
i.e. $\prob(Z>m)/\overline{F}(\log m)\to\infty$ as $m\to\infty$,
where $Z$ is sampled from the stationary distribution.
The distribution tail asymptotics of $Z_n$ and $Z$ are specified in the following two results.
The first result provides two asymptotic lower bounds, for finite and infinite time horizons, 
where the first bound is uniform for all generations.

\begin{thm}\label{thm:tail:generation:size.lower}
Assume that $A\le\widehat A$  a.s. for some constant $\widehat A<1$.
Then the following lower bounds hold.

{\rm (i)} If the distribution $F$ is long-tailed, then
\begin{eqnarray}\label{eq:stat1}
\prob(Z_n>m) &\ge& (a^{-1}+o(1)) F_I(\log m,\log m+na]
\mbox{ as }m\to\infty\mbox{ uniformly for all }n\ge 1.
\end{eqnarray}

{\rm (ii)} If the integrated tail distribution $F_I$ is long-tailed and
the stability condition \eqref{stab.cond} holds, then
\begin{eqnarray}\label{eq:stat2}
\prob(Z>m) &\ge& (a^{-1}+o(1)) \overline F_I(\log m)\quad\mbox{as }m\to\infty.
\end{eqnarray}
\end{thm}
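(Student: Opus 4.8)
The plan is to obtain both lower bounds from the same mechanism: the population becomes large because a single environmental parameter $A_k$ is atypically small (equivalently $\xi_k$ is atypically large), and once this happens the geometric offspring at step $k$ produces on the order of $e^{\xi_k}$ particles, which then persist through the remaining generations. The starting point is the recursion \eqref{Zn}. Conditionally on $\cA$, $Z_{k}$ is a sum of $Z_{k-1}+1\ge 1$ i.i.d.\ geometric$(A_{k-1})$ variables, so in particular $Z_k$ stochastically dominates a single geometric random variable $B_k$ with $\prob(B_k\ge j\mid \cA)=(1-A_{k-1})^{j}$. Hence $\prob(B_k\ge m\mid\cA)=e^{-m A_{k-1}(1+o(1))}$ when $A_{k-1}$ is small, and more robustly $\prob(B_k\ge m\mid \xi_{k-1}=x)\approx e^{-m e^{-x}}$, which is close to $1$ as soon as $x\ge \log m + c$ for large $c$. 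This gives the one-step input: conditionally on $\xi_{k-1}$ being large, $Z_k$ is already of order $e^{\xi_{k-1}}$.

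The next step is to push this mass forward in time without losing more than a constant factor. Here the hypothesis $A\le\widehat A<1$ a.s.\ is essential: it gives a uniform lower bound on the mean offspring, $\exptn(B_n\mid\cA)=(1-A_n)/A_n\ge (1-\widehat A)/\widehat A>0$, and more usefully it bounds the probability that a particle has \emph{no} offspring, $\prob(B_n=0\mid\cA)=A_n\le\widehat A$. So starting from $Z_k\approx e^{\xi_{k-1}}$ particles, after one more generation we still have a positive (in fact bounded below) proportion surviving, and iterating, $Z_n$ stays of order $e^{\xi_{k-1}}$ for all $n\ge k$ with probability bounded away from $0$, uniformly in $n$; a second-moment or Chernoff argument on the Galton--Watson-with-immigration dynamics between generations $k$ and $n$ makes this precise. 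Combining with the one-step input, for each fixed $k\le n$,
\[
\prob(Z_n>m)\ \gtrsim\ \prob\bigl(\xi_{k-1}>\log m + c\bigr)\ =\ \overline F(\log m + c),
\]
and long-tailedness of $F$ removes the constant $c$. Summing the disjointified contributions over $k=1,\dots,n$ — the events ``$\xi_{k-1}$ is the unique large one'' are asymptotically disjoint — and using that $\sum_{k=1}^{n}\overline F(\log m + ka)\sim a^{-1}F_I(\log m,\log m+na]$ by a Riemann-sum comparison (the shift $ka$ accounts for the deterministic drift $-a$ per generation that the large value must overcome), yields \eqref{eq:stat1}, uniformly in $n$ because the per-term survival estimates are uniform. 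For part (ii), under \eqref{stab.cond} the stationary $Z$ is the $n\to\infty$ limit, so we let $n\to\infty$ in (i): $F_I(\log m,\log m+na]\to\overline F_I(\log m)$ as $n\to\infty$ (the integrated tail of the whole half-line), and long-tailedness of $F_I$ lets us interchange the limits, giving \eqref{eq:stat2}.

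The main obstacle is the uniformity in $n$ in part (i): I must control the survival probability of the ``burst'' of $\approx e^{\xi_{k-1}}$ particles over an \emph{arbitrarily long} time horizon $n-k$ without the estimate degrading. The bound $\prob(B_n=0\mid\cA)\le\widehat A<1$ guarantees that the immigrant alone sustains a supercritical-in-expectation line, but to keep a \emph{macroscopic} fraction of the initial burst alive for all subsequent generations one needs that the process started from $e^{\xi_{k-1}}$ particles does not die out or shrink below $\varepsilon e^{\xi_{k-1}}$; this is where a careful coupling with a supercritical Galton--Watson process (or a direct Paley--Zygmund estimate using the uniformly bounded offspring variance, which holds because $A$ is bounded away from both $0$—no, not $0$—well, away from $1$, and the relevant second moment $\exptn(B_n^2\mid\cA)$ is finite a.s.) must be made quantitative and $n$-free. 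Handling the (rare) event that $A_{k-1}$ is also close to $0$, where variances blow up, requires truncating to $\{\xi_{k-1}\in(\log m+c,\ 2\log m]\}$, say, whose probability is still $\sim\overline F(\log m)$ by long-tailedness; on that event all conditional moments are polynomially controlled and the iteration goes through.
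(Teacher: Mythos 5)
Your overall mechanism is the same as the paper's (a single atypically small $A_k$ produces a geometric burst, the contributions over $k$ are asymptotically disjoint, and the Riemann sum $\sum_k\overline F(\log m+ka)$ gives $a^{-1}F_I(\log m,\log m+na]$), but the technical heart --- propagating the burst to time $n$ uniformly in $n$ --- is where your sketch goes wrong. Under \eqref{stab.cond} the conditional one-step multiplication factor is $\exptn(B_{j+1}\mid\mathcal A)=e^{\xi_j}$ with $\exptn\xi=-a<0$, so after the burst the population typically \emph{shrinks} by a factor $\approx e^{-a}$ per generation. Hence your claims that ``$Z_n$ stays of order $e^{\xi_{k-1}}$ for all $n\ge k$ with probability bounded away from $0$, uniformly in $n$'' and that one keeps ``a macroscopic fraction of the initial burst alive for all subsequent generations'' are false for large $n-k$, and the displayed bound $\prob(Z_n>m)\gtrsim\overline F(\log m+c)$ ``for each fixed $k\le n$'' cannot hold uniformly in $n$: summed over $k$ it would give $n\overline F(\log m)$, contradicting the stationary tail $a^{-1}\overline F_I(\log m)$ that you yourself derive in (ii). The correct requirement (consistent with your later shift by $ka$, but not with the persistence argument you sketch) is that the burst must have size $\ge m e^{(a+\varepsilon)(n-k)}$, i.e.\ $\xi_k\gtrsim\log m+(a+\varepsilon)(n-k)$. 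The paper makes this work by (a) including in the rare event an SLLN control $|S_{j,n-1}-(n-j)\exptn\xi|\le c_2+\varepsilon(n-j)$ on the post-burst environment, whose probability is close to $1$ uniformly in $n$, and (b) a Chernoff bound on the normalized offspring $B/a_A$, $a_A=(1-A)/A$, which is uniform in $A\le\widehat A<1$ and shows that a population of current size $l$ multiplies by at least $e^{\xi_j-\varepsilon}$ except with probability $e^{-\delta l}$, summable along the remaining generations. Your substitute, the survival bound $\prob(B=0\mid\mathcal A)\le\widehat A$, only guarantees a fraction $(1-\widehat A)^{n-k}$ of the burst and therefore cannot be made $n$-free; this is the missing idea, not a detail.

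Two further points. First, the theorem asserts the sharp constant $a^{-1}+o(1)$, so the conditional probability of $\{Z_n>m\}$ given the atypical event must tend to $1$ in the appropriate iterated limits (in the paper: $(1-c_1/l)^l\to e^{-c_1}$ with $c_1\downarrow 0$, plus the vanishing Chernoff failure terms); a persistence probability merely ``bounded away from $0$'' loses the constant. Your proposed truncation to $\xi_{k-1}\in(\log m+c,\,2\log m]$ also loses a constant factor in general (e.g.\ for $\overline F(x)=x^{-\alpha}$ one has $\overline F(2\log m)=2^{-\alpha}\overline F(\log m)$, which is not negligible), and it is unnecessary: after normalizing by the conditional mean, the exponential moments of $B/a_A$ are uniformly controlled for all $A\le\widehat A$, small $A$ included. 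Second, part (ii) is stated under long-tailedness of $F_I$ only; obtaining it by letting $n\to\infty$ in (i) presupposes that $F$ itself is long-tailed, whereas the paper reruns the finite-horizon argument and uses long-tailedness of $F_I$ alone to absorb the constant shifts in $\overline F_I(\log m)$.
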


The next result presents conditions for existence of upper bounds that match the lower bounds of Theorem \ref{thm:tail:generation:size.lower}.

\begin{thm}\label{thm:tail:generation:size.upper}
Let the stability condition \eqref{stab.cond} hold
and the distribution $F$ be such that
\begin{eqnarray}\label{cond.sqrt}
\overline F(m-\sqrt m)\sim\overline F(m)\quad
\mbox{ and }\quad\overline F(m)e^{\sqrt m}\to\infty\quad\mbox{as }m\to\infty.
\end{eqnarray}
Then the following upper bounds hold.

{\rm (i)} If the distribution $F$ is strong subexponential, then
\begin{eqnarray}\label{eq:stat1.eq}
\prob(Z_n>m) &\le& (a^{-1}+o(1)) F_I(\log m,\log m+na]
\mbox{ as }m\to\infty
\mbox{ uniformly for all }n\ge 1.
\end{eqnarray}

{\rm (ii)} If the integrated tail distribution $F_I$ is subexponential, then
\begin{eqnarray}\label{eq:stat2.eq}
\prob(Z>m) &\le& (a^{-1}+o(1)) \overline F_I(\log m)\quad\mbox{as }m\to\infty.
\end{eqnarray}
\end{thm}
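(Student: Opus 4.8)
\textbf{Proof plan for Theorem \ref{thm:tail:generation:size.upper}.}
The plan is to exploit the recursive representation \eqref{Zn} together with a
``one big jump'' decomposition adapted to the \emph{logarithmic} scale, matching the
lower bounds of Theorem \ref{thm:tail:generation:size.lower}. The starting point is
the observation that, conditionally on the environment $\cA$, the sum
$Z_{n+1}=\sum_{i=1}^{Z_n+1}B_{n+1,i}$ of $Z_n+1$ i.i.d.\ geometric variables
has conditional mean $(Z_n+1)(1-A_n)/A_n=(Z_n+1)e^{\xi_n}$ and is sharply
concentrated around it; more precisely, for $Z_n$ large one has
$Z_{n+1}\approx (Z_n+1)e^{\xi_n}$ up to multiplicative fluctuations that are
negligible on the logarithmic scale because of the condition
$\overline F(m-\sqrt m)\sim\overline F(m)$ in \eqref{cond.sqrt}. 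Iterating, the
dominant behaviour is $\log Z_n\approx \xi_1+\dots+\xi_n + (\text{negligible})$,
i.e.\ $\log Z_n$ behaves like the associated random walk $S_n=\xi_1+\dots+\xi_n$
with negative drift $-a$, to which the classical subexponential theory applies.

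First I would make the heuristic $\log Z_n\approx S_n$ rigorous in the form of a
two-sided comparison. For the upper bound one writes, on the event that exactly
one increment $\xi_k$ is large (say exceeds $\varepsilon\log m$ for small
$\varepsilon$) and all others are moderate, $\log Z_n\le \xi_k + C$ where $C$
absorbs a sum of moderate increments plus the concentration errors; summing over
$k$ and using long-tailedness gives the ``single big jump'' upper contribution
$\sum_{k=1}^n\prob(\xi_k>\log m - (\text{lower-order}))$. The key analytic input
is that, because of the logarithmic change of scale, the \emph{sum} of the
moderate increments $S_n-\xi_k$ behaves like a random walk with drift $-a$, so
the relevant tail is not $n\,\overline F(\log m)$ but the \emph{integrated} tail:
summing a geometrically-spread-out collection of thresholds
$\log m + ja$, $j=0,1,2,\dots$, produces exactly
$a^{-1}F_I(\log m,\log m+na]$ in the finite-horizon case and, letting $n\to\infty$
under \eqref{stab.cond}, $a^{-1}\overline F_I(\log m)$ in the stationary case. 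This
is where the strong subexponentiality in part (i) and subexponentiality of $F_I$
in part (ii) enter: they guarantee that the contribution of configurations with
two or more large increments, and of the concentration errors, is
$o(\overline F_I(\log m))$, uniformly in $n$.

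For the uniformity in $n$ in part (i), I would run the estimates through a
truncation argument: fix a level $N$ and treat $n\le N$ by a finite induction on
generations (using Theorem \ref{thm:tail:fixed:generation:size} as the base and
the concentration bound \eqref{cond.sqrt} for the inductive step), and handle
$n>N$ by comparison with the stationary bound from part (ii) plus a tail estimate
showing that, beyond generation $N$, the random walk $S_n$ has with high
probability descended far below $\log m$; choosing $N$ large makes the error
terms uniformly small. Part (ii) then follows either by letting $n\to\infty$ in
(i), or more directly by a renewal/regeneration argument for the stationary
chain, expressing $Z$ via its stationary representation and applying Veraverbeke-type
asymptotics for the maximum of a random walk with negative drift and
subexponential increments, which yields $a^{-1}\overline F_I$.

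The main obstacle, I expect, will be controlling the accumulated concentration
error in the iteration $\log Z_{n+1}=\log(Z_n+1)+\xi_{n+1}+(\text{error})$
\emph{uniformly in $n$}: a naive bound lets the errors grow with $n$, whereas the
condition $\overline F(m)e^{\sqrt m}\to\infty$ in \eqref{cond.sqrt} is precisely
what is needed to show that the per-step multiplicative fluctuation of order
$e^{\pm C\sqrt{Z_n}}$ is swallowed by the tail of $F$ and does not accumulate to
something comparable to $\overline F_I(\log m)$. Making this quantitative —
i.e.\ showing $\prob(|\log Z_n - S_n|>\sqrt{\log m}\ \text{at any stage})$ is
$o(\overline F_I(\log m))$ uniformly — is the technical heart of the proof, and
the second display in \eqref{cond.sqrt} will be invoked exactly there.
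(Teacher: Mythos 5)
Your plan has the right heuristic target (work on the logarithmic scale, single big jump, integrated tail, with \eqref{cond.sqrt} absorbing the errors), but the proposed execution has a genuine gap exactly at the point the theorem is about: uniformity in $n$. Your scheme --- finite induction up to a fixed level $N$ based on Theorem \ref{thm:tail:fixed:generation:size}, then comparison with the stationary bound of part (ii) for $n>N$ --- cannot yield \eqref{eq:stat1.eq}: since $F_I$ is long-tailed, for every fixed $n$ one has $F_I(\log m,\log m+na]=o(\overline F_I(\log m))$ as $m\to\infty$, so bounding $\prob(Z_n>m)$ by $\prob(Z>m)\sim a^{-1}\overline F_I(\log m)$ overshoots the required right-hand side by an unbounded factor for each $n>N$ (and using (ii) to prove (i) while deriving (ii) from (i) is circular unless the Veraverbeke route is carried out independently). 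Relatedly, your stated error budget ``two or more big jumps and concentration errors are $o(\overline F_I(\log m))$ uniformly in $n$'' is too weak: the errors must be $o\bigl(F_I(\log m,\log m+na]\bigr)$ uniformly in $n$, whose worst case ($n=1$) is $o(\overline F(\log m))$. What is missing is an input that is uniform in $n$ from the start. The paper reduces $\{Z_n>m\}$ to the event that $S_{k,n-1}>\log m-\sqrt{\log m}-2\varepsilon(n-k)$ for some $k\le n-1$, i.e.\ to a maximum of partial sums of a random walk with negative drift, and invokes the known uniform-in-$n$ asymptotics for such maxima under strong subexponentiality (\cite{Dima2002}; \cite[Theorem 5.3]{FKZ}); this is precisely where strong subexponentiality is used and is what produces $a^{-1}F_I(\log m,\log m+na]$ uniformly, including the intermediate regime where $n$ grows with $m$, which your ``sum of thresholds $\log m+ja$'' heuristic does not control.

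The second problem is the two-sided concentration programme, which is both not quite right and unnecessary. Because of immigration, $\log Z_n$ is governed by $\max_{k}S_{k,n-1}$ (equivalently by the perpetuity $\exptn(Z_n\mid\mathcal A)=\sum_{k}e^{S_{k,n-1}}$), not by $S_{0,n-1}$; the per-step fluctuation is multiplicative of order $1+O\bigl((Z_nA_n)^{-1/2}\bigr)$, not $e^{\pm C\sqrt{Z_n}}$, and it degenerates whenever $Z_n$ is small, which under \eqref{stab.cond} happens persistently, so a uniform-in-$n$ pathwise control of $|\log Z_n-S_n|$ would be delicate. For an upper bound none of this is needed: decompose $Z_n=\sum_{k}W_n^{(k)}$ over immigrants (where $W_n^{(k)}$ are the time-$n$ descendants of the immigrant arriving at time $k$, with $\exptn(W_n^{(k)}\mid\mathcal A)=e^{S_{k,n-1}}$), or condition on $\mathcal A$ and use $\exptn(Z_n\mid\mathcal A)$, and apply the conditional Markov inequality with a multiplicative slack $e^{\sqrt{\log m}}$. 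On the complement of the random-walk event the conditional probability is then of order $e^{-\sqrt{\log m}}$, which is $o(\overline F(\log m))$ by the second condition in \eqref{cond.sqrt}, while square-root insensitivity removes the $\sqrt{\log m}$ shift in the limits of the integral $\int\overline F$. You correctly identified where \eqref{cond.sqrt} must enter, but the Markov-inequality-with-slack device is the missing idea that turns all ``concentration errors'' into a single term, uniform in $n$, with no concentration analysis at all; part (ii) then follows by letting $n\to\infty$ and using the perpetuity tail asymptotics, which require only subexponentiality of $F_I$ rather than strong subexponentiality of $F$.
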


Distributions satisfying the first condition in \eqref{cond.sqrt} are
called {\it square-root insensitive}, see e.g. \cite[Sect. 2.8]{FKZ}.
Typical examples of distributions satisfying \eqref{cond.sqrt} are:
any regularly varying distribution, the log-normal distribution and  
a Weibull distribution with parameter less than $1/2$.

We do not know, how essential is the square-root insensitivity condition for the upper bounds in Theorem \ref{thm:tail:generation:size.upper} to hold. 
In the literature, 
there are various scenarios where extra randomness leads to appearance 
of further terms in the tail asymptotics due to the effects
caused by the central limit theorem. Namely, for the Weibull distribution 
$\overline{F}(x) =\exp (-x^{\beta})$ with parameter $\beta\in[1/2,1)$,  
the number of extra terms appearing in the tail asymptotics depends on the interval $[n/(n+1), (n+1)/(n+2))$, 
$n=1$, $2$, \ldots\ the parameter $\beta$ belongs to -- 
see e.g. \cite{AKS1998} and \cite{FK2000} 
for the distributional tail asymptotics of the stationary queue length in a single-server queue
or \cite{DKW2020} for the tail asymptotics of the stationary distribution in a 
Markov chain with asymptotically zero drift. 
However, we are not certain that similar arguments may be relevant to the
model considered in the present paper.

If the distribution $F$ satisfies all the conditions of Theorems
\ref{thm:tail:generation:size.lower} and
\ref{thm:tail:generation:size.upper}, then the corresponding lower and upper 
bounds match each other and we conclude the following tail asymptotics:
\begin{eqnarray}\label{eq:stat1.eq.asy}
\prob(Z_n>m) &\sim& a^{-1} F_I(\log m,\log m+na]
\mbox{ as }m\to\infty\mbox{ uniformly for all }n\ge 1,\\
\label{eq:stat2.eq.asy}
\prob(Z>m) &\sim& a^{-1} \overline F_I(\log m)\quad\mbox{as }m\to\infty.
\end{eqnarray}


These asymptotics may be intuitively interpreted as follows:
$Z_n$ takes a large value if one of the $\xi$'s is sufficiently large, 
i.e.\ one of the success probabilities $A$'s is small.
This phenomenon may be named as {\it the principle of a single atypical 
environment} and formulated as follows.

For any $c>1$ and $\varepsilon>0$ let us introduce events
\begin{eqnarray*}
E_n^{(k)}(m) &=&
\bigl\{Z_k\le c,\ \xi_k>\log m+(a+\varepsilon)(n-k),\\
&&\hspace{15mm}|S_{j,n-1}-(n-j)\exptn\xi|\le c+\varepsilon(n-j)
\mbox{ for all }j\in[k+1,n-1]\bigr\},\quad k\le n-1,
\end{eqnarray*}
where $S_{j,n}:=\xi_j+\ldots+\xi_n$.
The event $E_n^{(k)}(m)$ describes all trajectories such that the value of $Z_k$ is relatively small, then 
the success probability $A_k$ is close to zero and, 
as a result, a single atypical environment occurs, and after time $k$ 
the environment follows the strong law of large numbers with drift $-a$.
As stated in the next theorem, the union of all these events
provides the most probable way for  the large deviations of $Z_n$ to occur.

\begin{thm}\label{thm:PSLE}
Assume that conditions of Theorems \ref{thm:tail:generation:size.lower} and
\ref{thm:tail:generation:size.upper} hold. Then, for any fixed $\varepsilon>0$,
\begin{eqnarray}\label{eq:PSLE}
\lim_{c\to\infty}\lim_{m\to\infty}
\inf_{n\ge 1}\prob\biggl(\bigcup_{k=0}^{n-1} E_n^{(k)}(m)\ \Big|\ Z_n>m\biggr)
&=& 1.
\end{eqnarray}
\end{thm}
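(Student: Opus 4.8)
The plan is to exploit the fact that, under the hypotheses of Theorems~\ref{thm:tail:generation:size.lower} and \ref{thm:tail:generation:size.upper}, we already know the \emph{exact} tail asymptotics \eqref{eq:stat1.eq.asy}, namely $\prob(Z_n>m)\sim a^{-1}F_I(\log m,\log m+na]$ uniformly in $n$. So the strategy is a standard ``matching lower bound on the good event'' argument: it suffices to show that
\alns{
\liminf_{c\to\infty}\liminf_{m\to\infty}\inf_{n\ge 1}
\frac{\prob\bigl(\bigcup_{k=0}^{n-1}E_n^{(k)}(m)\bigr)}{\prob(Z_n>m)}\ \ge\ 1,
}
because the numerator of the conditional probability in \eqref{eq:PSLE} is
$\prob\bigl(\{Z_n>m\}\cap\bigcup_k E_n^{(k)}(m)\bigr)\ge \prob\bigl(\bigcup_k E_n^{(k)}(m)\bigr)-\prob\bigl(\bigcup_k E_n^{(k)}(m)\setminus\{Z_n>m\}\bigr)$; the last correction term is shown to be $o(\overline F(\log m))$ uniformly, once one checks that on $E_n^{(k)}(m)$ with $c$ large and $\varepsilon$ small the event $\{Z_n>m\}$ in fact occurs with probability close to $1$.

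First I would establish a deterministic lower bound for $Z_n$ on $E_n^{(k)}(m)$. Starting from $Z_k\le c$, one immigrant enters and the offspring distribution in generation $k$ is geometric with the atypically small parameter $A_k$, so $A_k/(1-A_k)=e^{-\xi_k}\le e^{-\log m-(a+\varepsilon)(n-k)}$. Iterating \eqref{Zn}, conditionally on $\cA$ the process $Z_n$ from time $k$ onward is stochastically comparable to a branching process with immigration whose mean offspring numbers are $e^{\xi_{k}},e^{\xi_{k+1}},\dots$; the multiplicative drift of the sum $S_{k+1,n-1}$ being controlled by $|S_{j,n-1}-(n-j)\exptn\xi|\le c+\varepsilon(n-j)$ on the event, one gets that the conditional mean of $Z_n$ is at least of order $e^{\xi_k}e^{S_{k+1,n-1}}\ge m\,e^{(a+\varepsilon)(n-k)}e^{-a(n-1-k)-c-\varepsilon(n-1-k)}\ge m\,e^{-c}$, growing like $m$. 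A second-moment or Paley--Zygmund estimate (or a direct computation using the explicit geometric generating functions, as in the proof of Theorem~\ref{thm:tail:fixed:generation:size}) then shows $\prob(Z_n>m\mid \cA)\to 1$ uniformly on the event as $c\to\infty$; this is where the square-root-type regularity is convenient but, since we are only after a one-line-robust bound here and the fluctuations are logarithmic relative to $m$, the cruder long-tailed estimates already suffice.

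Next I would lower-bound $\prob\bigl(\bigcup_{k=0}^{n-1}E_n^{(k)}(m)\bigr)$. The events $E_n^{(k)}(m)$, $k=0,\dots,n-1$, are \emph{not} disjoint, but they become asymptotically disjoint in the relevant regime: on $E_n^{(k)}(m)$ exactly one of the $\xi$'s, namely $\xi_k$, is of size $\gtrsim\log m$, while on $E_n^{(k')}(m)$ for $k'\ne k$ the constraint $|S_{j,n-1}-(n-j)\exptn\xi|\le c+\varepsilon(n-j)$ for $j\in[k+1,n-1]$ forces $\xi_k$ to be $O(\log\log m)$; hence $\prob(E_n^{(k)}(m)\cap E_n^{(k')}(m))=o(\overline F(\log m))$ summably. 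By inclusion--exclusion and the fact (from the independence of the $\xi$'s and the a.s.\ boundedness $A\le\widehat A$) that $\prob(E_n^{(k)}(m))\sim \prob(Z_k\le c)\,\prob(\text{SLLN band for }S_{k+1,n-1})\,\overline F(\log m+(a+\varepsilon)(n-k))$, one sums over $k$. Choosing $\varepsilon$ small and $c$ large, $\prob(Z_k\le c)\to 1$ and the SLLN-band probability $\to 1$ uniformly in $n$ and $k$ by Kolmogorov's inequality / the uniform strong law; the geometric-type sum $\sum_{k=0}^{n-1}\overline F(\log m+(a+\varepsilon)(n-k))$ is, up to the $\varepsilon$-slack, comparable to $a^{-1}F_I(\log m,\log m+na]$ because $\overline F$ is long-tailed and $\int_{\log m}^{\log m+na}\overline F\approx a\sum_k\overline F(\log m+a(n-k))$ (Riemann-sum comparison). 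Letting $m\to\infty$, then $c\to\infty$, then $\varepsilon\to0$ recovers the constant $a^{-1}$ and matches $\prob(Z_n>m)$.

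The main obstacle is the uniformity in $n$ in the second paragraph: one must control $\prob(Z_k\le c)$ and the probability of the SLLN band \emph{simultaneously for all} $n\ge1$ and all $k\le n-1$, and one must handle the boundary cases where $n-k$ is small (so there is essentially no SLLN regime to invoke) — there the term $\overline F(\log m+(a+\varepsilon)(n-k))\sim\overline F(\log m)$ and the event $E_n^{(k)}(m)$ is essentially $\{Z_k\le c,\ \xi_k>\log m\}$, which is handled directly as in the proof of Theorem~\ref{thm:tail:fixed:generation:size}. The uniform tightness $\sup_n\prob(Z_n>c)\to0$ as $c\to\infty$ follows from the Foster/stability condition \eqref{stab.cond} (the chain $(Z_n)$ is positive recurrent, hence tight), and the uniform SLLN band from a maximal inequality; assembling these and the asymptotic-disjointness bookkeeping into a single clean estimate uniform in $n$ is the technical heart of the argument.
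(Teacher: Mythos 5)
Your overall architecture is the same as the paper's: lower-bound $\prob\bigl(\bigcup_{k}E_n^{(k)}(m)\bigr)$ by (a constant close to) $a^{-1}F_I(\log m,\log m+na]$ using independence of the $\xi$'s, the SLLN band and a Riemann-sum comparison --- this is exactly the computation already carried out in Section \ref{sec:lower} --- then show that conditionally on this union $Z_n>m$ occurs with probability tending to one, and finally divide by the matching upper bound of Theorem \ref{thm:tail:generation:size.upper}. (The paper does the last step via the identity $\prob\bigl(\bigcup_k E_n^{(k)}(m)\mid Z_n>m\bigr)=\prob\bigl(Z_n>m\mid\bigcup_k E_n^{(k)}(m)\bigr)\,\prob\bigl(\bigcup_k E_n^{(k)}(m)\bigr)/\prob(Z_n>m)$, you via a set-difference bound; these are the same thing. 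Your $\varepsilon$-slack, giving $a/(a+\varepsilon)$ before the final limit, also mirrors the paper.)

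There is, however, a genuine gap in your justification of the middle step. On $E_n^{(k)}(m)$ your lower bound for the conditional mean is $\exptn(Z_n\mid\cA)\gtrsim e^{\xi_k+S_{k+1,n-1}}\ge m\,e^{a+\varepsilon-c}$, and you then invoke Paley--Zygmund (or a second-moment bound) to conclude $\prob(Z_n>m\mid\cA)\to1$ on the event. This cannot work as written: on the part of $E_n^{(k)}(m)$ where $S_{k+1,n-1}$ sits near the lower edge of the band, Markov's inequality gives $\prob(Z_n>m\mid\cA)\le\exptn(Z_n\mid\cA)/m$, which may be of order $e^{-c}$ --- small, not close to $1$ --- and Paley--Zygmund only controls exceedance of a fixed fraction of the conditional mean, not of a level $m$ that can exceed that mean by the factor $e^{c}$ which you subsequently send to infinity. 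The missing ingredient, implicit in Section \ref{sec:lower} where the jump threshold is built with the extra constants $c_2\to\infty$ and $-\log c_1\to\infty$, is that by long-tailedness the conditional overshoot of $\xi_k$ over the threshold $\log m+(a+\varepsilon)(n-k)$ tends to infinity in probability, uniformly in $n$ and $k$, so that on an asymptotically full portion of $E_n^{(k)}(m)$ the conditional mean is in fact much larger than $m$; and even then one needs the propagation-through-generations concentration estimate of Section \ref{sec:lower} (the exponential Markov bound yielding the $\sum_j e^{-l\delta\cdots}$ error) to get the statement uniformly in $n-k$, rather than a bare first/second moment argument. Once this is supplied, your argument coincides with the paper's proof.
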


Let us highlight a natural link of branching processes in the random
environment to stochastic difference equations.
It follows from the recurrence equation 
\begin{eqnarray*}
\exptn(Z_n\mid\mathcal A,\ Z_{n-1}) &=& (Z_{n-1}+1)\exptn(B_n\mid\mathcal A)\\
&=& (Z_{n-1}+1) \bigg(\frac{1}{A_{n-1}}-1\biggr)
\ =\ (Z_{n-1}+1)e^{\xi_{n-1}}
\end{eqnarray*}
that, for each $n$, the conditional expectation of $Z_n$,
\begin{eqnarray}\label{perp}
\exptn(Z_n\mid\mathcal A) &=&
\sum_{k=0}^{n-1} e^{\sum_{l=k}^{n-1} \xi_l}
\ =\ \sum_{k=0}^{n-1}e^{S_{k,n-1}},
\end{eqnarray}
is distributed as a finite time horizon perpetuity, and its limit $\exptn(Z\mid\mathcal A)$
as the solution to the stochastic fixed point equation. Their tail asymptotic behaviour
in the heavy-tailed case is the same as given in
\eqref{eq:stat1.eq.asy}--\eqref{eq:stat2.eq.asy}, that is,
\begin{eqnarray}\label{perp1}
\prob\bigl[\exptn(Z_n\mid\mathcal A)>m\bigr] &\sim& a^{-1} F_I(\log m,\log m+na]
\mbox{ as }m\to\infty\mbox{ uniformly for all }n\ge 1,\\
\label{perp2}
\prob\bigl[\exptn(Z\mid\mathcal A)>m\bigr] &\sim& a^{-1} \overline F_I(\log m)\quad\mbox{as }m\to\infty,
\end{eqnarray}
see \cite{Dyszewski} for \eqref{perp2} and \cite{Dima2020} for general case.

The remainder of the paper is dedicated to the proofs of the results above.
We close our paper by Section \ref{sec:extensions}
which contains some discussion and possible extensions.

\section{Finite time horizon tail asymptotics,
proof of Theorem~\ref{thm:tail:fixed:generation:size}}
\label{sec:n}

We start with some useful representations. Firstly,
\begin{eqnarray}\label{rep.Z.m.EA}
\prob(Z_1>m) &=& \prob(B_1>m)\ =\ \exptn \bigl((1-A_0)^{m+1}\bigr).
\end{eqnarray}

Secondly let us observe that the $k$-fold convolution of geometric distribution
is known in the closed form, and its probability mass function is hypergeometric:
\begin{eqnarray*}
\prob(B_1+\ldots+B_k=m\mid\mathcal{A}) &=& A^k(1-A)^m\frac{(m+1)\ldots(m+k-1)}{(k-1)!}
\quad\mbox{for all }k\ge 2\mbox{ and }m\ge 0.
\end{eqnarray*}
Therefore, for $k\ge 2$,
\begin{eqnarray*}
\prob(B_1+\ldots+B_k>m\mid\mathcal{A}) &=& (-1)^{k-1}\frac{A^k}{(k-1)!}
\frac{{\rm d}^{k-1}}{{\rm d}A^{k-1}}\sum_{n=m+1}^\infty(1-A)^{n+k-1}\\
&=& (-1)^{k-1}\frac{A^k}{(k-1)!}
\frac{{\rm d}^{k-1}}{{\rm d}A^{k-1}}\frac{(1-A)^{m+k}}{A}\\
&=& (-1)^{k-1}\frac{A^k}{(k-1)!}
\sum_{j=0}^{k-1} {{k-1}\choose{j}}
\frac{{\rm d}^j}{{\rm d}A^j}(1-A)^{m+k}
\frac{{\rm d}^{k-1-j}}{{\rm d}A^{k-1-j}}\frac{1}{A},
\end{eqnarray*}
which yields the following binomial representation that is convenient for further analysis,
\begin{eqnarray}\label{reprenegbinomial}
\prob(B_1+\ldots+B_k>m\mid\mathcal{A}) &=& A^k
\sum_{j=0}^{k-1} {{m+k}\choose{j}}
(1-A)^{m+k-j} \frac{1}{A^{k-j}}\nonumber\\
&=& \sum_{j=0}^{k-1} {{m+k}\choose{j}} A^j(1-A)^{m+k-j}.\label{reprenegbinomial}
\end{eqnarray}

The above representations allow us to prove 
two auxiliary results.

\begin{lemma}\label{l:1.exp} Under the assumption \eqref{assregvar},
\begin{eqnarray}\label{rep.Z.m.EA}
\exptn \bigl((1-A)^m\bigr) &\sim& \overline F(\log m)
\quad\mbox{as }m\to\infty.
\end{eqnarray}
\end{lemma}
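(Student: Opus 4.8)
The quantity to estimate is $\exptn\bigl((1-A)^m\bigr)$, and the natural move is to rewrite it in terms of $\xi=\log\frac{1-A}{A}$, since the hypothesis \eqref{assregvar} is a statement about the tail of $F=\mathcal L(\xi)$. Note that $1-A=\frac{e^\xi}{1+e^\xi}$, so $(1-A)^m=\exp\{-m\log(1+e^{-\xi})\}=:e^{-m\varphi(\xi)}$ where $\varphi(x)=\log(1+e^{-x})$ is positive, strictly decreasing, and satisfies $\varphi(x)\sim e^{-x}$ as $x\to\infty$ and $\varphi(x)\to\infty$ as $x\to-\infty$. Thus I would write
\begin{eqnarray*}
\exptn\bigl((1-A)^m\bigr) &=& \exptn\, e^{-m\varphi(\xi)}\ =\ \int_{\mbbr} e^{-m\varphi(x)}\,F(dx).
\end{eqnarray*}

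The heuristic is that $e^{-m\varphi(x)}$ is essentially the indicator that $\varphi(x)$ is of order $1/m$ or smaller, i.e.\ that $x$ is of order $\log m$ or larger; so the integral should behave like $\prob(\varphi(\xi)\lesssim 1/m)\approx\prob(\xi>\log m)=\overline F(\log m)$. To make this precise I would split the range of integration at a point growing like $\log m$ but slightly below it, say at $x_m:=\log m-h(m)$ with $h(m)\to\infty$ slowly (e.g.\ $h(m)=\log\log m$). For $x\le x_m$ one has $\varphi(x)\ge\varphi(x_m)\sim e^{-x_m}=e^{h(m)}/m$, so $e^{-m\varphi(x)}\le e^{-e^{h(m)}}$, giving
\begin{eqnarray*}
\int_{-\infty}^{x_m} e^{-m\varphi(x)}\,F(dx) &\le& e^{-e^{h(m)}}\ =\ o\bigl(\overline F(\log m)\bigr),
\end{eqnarray*}
where the last step uses that a long-tailed distribution is heavy-tailed, so $\overline F(\log m)$ decays slower than any exponential in $h(m)$ — here one must be slightly careful and choose $h$ so that $e^{-e^{h(m)}}$ beats $\overline F(\log m)$; since $\overline F$ is long-tailed, $\overline F(\log m)\ge \overline F(\log m +1)$ and more importantly $\overline F$ cannot decay exponentially fast, so any $h(m)\to\infty$ with $h(m)=o(\log\log(1/\overline F(\log m)))$ works, e.g.\ iterate: take $h(m)$ slowly enough. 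For $x>x_m$, on one hand $e^{-m\varphi(x)}\le 1$ gives the upper bound $\int_{x_m}^\infty e^{-m\varphi(x)}\,F(dx)\le\overline F(x_m)=\overline F(\log m-h(m))\sim\overline F(\log m)$ by long-tailedness (applied with $y=h(m)$ — note \eqref{assregvar} holds for fixed $y$, so to let $y=h(m)\to\infty$ I would invoke the standard fact that a long-tailed $F$ admits a function $h(m)\to\infty$ with $\overline F(x-h(m))\sim\overline F(x)$, e.g.\ \cite[Lemma~2.19]{FKZ}, and fix $h$ to satisfy both this and the previous requirement). On the other hand, for the lower bound, fix any large $K>0$; for $x>\log m+K$ we have $m\varphi(x)\le m\varphi(\log m+K)\to e^{-K}$, so $\liminf_m e^{-m\varphi(x)}\ge e^{-e^{-K}}$ uniformly on that range, whence
\begin{eqnarray*}
\liminf_{m\to\infty}\frac{\exptn\bigl((1-A)^m\bigr)}{\overline F(\log m)}
&\ge& e^{-e^{-K}}\liminf_{m\to\infty}\frac{\overline F(\log m+K)}{\overline F(\log m)}\ =\ e^{-e^{-K}},
\end{eqnarray*}
again by \eqref{assregvar} with the fixed $y=-K$; letting $K\to\infty$ gives $\liminf\ge 1$, and combined with the upper bound $\limsup\le 1$ this yields the claim.

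The only genuinely delicate point is the interplay between the two competing requirements on the auxiliary level $x_m=\log m-h(m)$: $h(m)$ must grow fast enough that the contribution of $\{x\le x_m\}$, bounded by $e^{-e^{h(m)}}$, is negligible relative to $\overline F(\log m)$, yet slowly enough that $\overline F(\log m-h(m))\sim\overline F(\log m)$. Both are possible simultaneously precisely because $\overline F$ is long-tailed — long-tailedness forbids exponential decay, so $1/\overline F(\log m)$ grows subexponentially in $m$ and hence $e^{-e^{h(m)}}=o(\overline F(\log m))$ for a suitably slow $h$, while the companion lemma on long-tailed distributions supplies an $h(m)\to\infty$ with the insensitivity property; intersecting the two admissible ranges for $h$ (taking the slower of the two) completes the argument. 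Everything else is a routine two-sided estimate via the split above. An even cleaner alternative, avoiding the explicit level $x_m$, is to write $\exptn e^{-m\varphi(\xi)}=\int_0^1\prob(e^{-m\varphi(\xi)}>t)\,dt=\int_0^1\prob(\varphi(\xi)<-\tfrac1m\log t)\,dt$ and substitute $t=e^{-ms}$, reducing to $m\int_0^\infty e^{-ms}\prob(\varphi(\xi)<s)\,ds$ and then analysing $\prob(\varphi(\xi)<s)\sim\overline F(\log(1/s))$ as $s\downarrow0$ by a Tauberian-type argument; I would nonetheless present the direct splitting proof as it is the most self-contained.
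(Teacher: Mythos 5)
Your lower bound is fine (it only uses \eqref{assregvar} with fixed shifts, just as the paper does), but the upper bound as presented has a genuine gap at exactly the point you flag as ``delicate''. You need a single threshold $x_m=\log m-h(m)$ satisfying simultaneously (1) $e^{-c\,e^{h(m)}}=o(\overline F(\log m))$ and (2) $\overline F(\log m-h(m))\sim\overline F(\log m)$. Requirement (1) forces $e^{h(m)}$ to dominate $\log\bigl(1/\overline F(\log m)\bigr)$, i.e.\ $h(m)\gtrsim\log\log\bigl(1/\overline F(\log m)\bigr)$ (note your displayed condition ``$h(m)=o(\log\log(1/\overline F(\log m)))$'' has the inequality the wrong way round), while (2) only holds for shifts below the insensitivity scale of $F$, and long-tailedness guarantees insensitivity only at \emph{some} scale $h\to\infty$, which can be much smaller than what (1) demands. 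Concretely, take $\overline F(x)=e^{-x/\log x}$ (long-tailed, since $x/\log x-(x-y)/\log(x-y)\to0$ for fixed $y$). Then (2) requires $h(m)=o(\log\log m)$, whereas (1) requires $e^{h(m)}\ge 2\log m/\log\log m$ eventually, i.e.\ $h(m)\ge(1-o(1))\log\log m$; the two admissible ranges are disjoint, and with the smallest $h$ allowed by (1) your right-hand piece only gives $\limsup\le e$, not $\le1$. For still heavier long-tailed tails such as $e^{-x/\log\log x}$ the single-cutoff bound does not even give the correct order. So the claim that the two requirements are compatible ``precisely because $\overline F$ is long-tailed'' is false, and the main argument you present does not prove the lemma under \eqref{assregvar} alone.

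The paper avoids this by replacing your single cutoff with a layered decomposition over $A\in(k/m,(k+1)/m]$, $K\le k\le[m/2]-1$: each layer is bounded by $e^{-k}\,\overline F(\log m-\log(k+1)-\log 2)$, and the elementary consequence of long-tailedness $\overline F(x-y)\le\gamma e^{y}\overline F(x)$ (valid for all $x,y>0$) makes the resulting series summable with total at most $\varepsilon(K)\overline F(\log m)$, $\varepsilon(K)\to0$; only fixed shifts of $\overline F$ are ever compared, so no insensitivity beyond \eqref{assregvar} is needed. Your parenthetical alternative via $m\int_0^\infty e^{-ms}\prob(\varphi(\xi)<s)\,ds$ and Karamata-type Abelian theorems for the slowly varying function $\overline G(x)=\overline F(\log x)$ could be developed into a correct proof, but it is only sketched; to repair the argument you should either carry that route out or adopt the layered splitting.
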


\begin{lemma}\label{l:jm.upper}
Under the assumption \eqref{assregvar},
there exist $\gamma<\infty$ and $\varepsilon>0$ such that
\begin{eqnarray*}
\exptn A^j(1-A)^m &\le&
\gamma\frac{j^jm^m}{(m+j)^{m+j}}\overline F(\log m-\log j)
\quad\mbox{for all }m>1\mbox{ and }j\le\varepsilon m.
\end{eqnarray*}
In particular, for any fixed $j\ge 1$,
\begin{eqnarray}
\exptn A^j(1-A)^m &=& o(\overline F(\log m))\quad\mbox{as }m\to\infty. \label{osmalljgeq1}
\end{eqnarray}
\end{lemma}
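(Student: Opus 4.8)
The plan is to estimate $\exptn A^j(1-A)^m$ by splitting the expectation according to the size of $A$, the natural threshold being the maximiser of the deterministic function $a\mapsto a^j(1-a)^m$, which is $a^*=j/(m+j)$ with value $j^jm^m/(m+j)^{m+j}$. First I would pass to the variable $\xi=\log((1-A)/A)$, so that $A=1/(1+e^\xi)$ and $1-A=e^\xi/(1+e^\xi)$; then $A^j(1-A)^m=e^{m\xi}/(1+e^\xi)^{m+j}$, and the maximiser $a^*$ corresponds to $\xi^*=\log(m/j)$. The key calculus estimate is that, on the range $j\le\varepsilon m$ with $\varepsilon$ small, the function $\xi\mapsto e^{m\xi}/(1+e^\xi)^{m+j}$ decays at least geometrically in $|\xi-\xi^*|$ away from its peak: concretely, there is a constant $\delta>0$ (depending only on $\varepsilon$) with
\beq
\frac{e^{m\xi}}{(1+e^\xi)^{m+j}}\ \le\ \frac{j^jm^m}{(m+j)^{m+j}}\,e^{-\delta|\xi-\xi^*|}\qquad\mbox{for all }\xi,\ j\le\varepsilon m.
\eeq
This follows because the log of the left side is concave in $\xi$ with second derivative bounded away from $0$ near the peak and with one-sided slopes bounded away from $0$ far from it (for $\xi\to-\infty$ the slope tends to $m$, for $\xi\to+\infty$ to $-j$, both at least $1$); the uniformity in $j\le\varepsilon m$ is what forces the restriction on $j$.

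Granting that pointwise bound, I would integrate against $F$:
\beq
\exptn A^j(1-A)^m\ \le\ \frac{j^jm^m}{(m+j)^{m+j}}\int_{\mbbr}e^{-\delta|\xi-\xi^*|}\,F(d\xi).
\eeq
The remaining integral is $\int e^{-\delta|\xi-\xi^*|}F(d\xi)$ with $\xi^*=\log m-\log j$, and I claim it is at most a constant multiple of $\overline F(\xi^*)=\overline F(\log m-\log j)$. Splitting at $\xi^*$: the contribution from $\xi\ge\xi^*$ is $\int_{\xi^*}^\infty e^{-\delta(\xi-\xi^*)}F(d\xi)\le\overline F(\xi^*)$ trivially; the contribution from $\xi<\xi^*$ is $\int_{-\infty}^{\xi^*}e^{-\delta(\xi^*-\xi)}F(d\xi)$, which one bounds by a geometric sum over unit intervals $[\xi^*-k-1,\xi^*-k)$, each carrying mass $\le\overline F(\xi^*-k-1)$; by the long-tailed property \eqref{assregvar}, $\overline F(\xi^*-k-1)\le C_k\overline F(\xi^*)$, but to make this summable I would instead use the standard consequence of long-tailedness that $\overline F$ can be compared to a slowly decaying function, or more simply invoke that for a long-tailed $F$ and any $\delta>0$ one has $\int_{-\infty}^x e^{-\delta(x-y)}F(dy)=O(\overline F(x))$ — this is where I expect the main (though routine) technical obstacle to lie, since one must handle the possibility that $F$ has most of its mass far below $\xi^*$; it is resolved by noting $\int_{-\infty}^{x}e^{-\delta(x-y)}F(dy)\le\sum_{k\ge0}e^{-\delta k}\prob(\xi\in(x-k-1,x-k])$ and, for long-tailed $F$, $\prob(\xi\in(x-k-1,x-k])\le\overline F(x-k-1)$, combined with $h(x):=\sup_{y\le x}\overline F(y-1)/\overline F(y)\to1$ to get a finite geometric-type series. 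This yields the displayed inequality with $\gamma:=\sup_x\bigl(\overline F(x)^{-1}\int_{-\infty}^x e^{-\delta(x-y)}F(dy)\bigr)+1<\infty$ and completes part one.

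For the final assertion \eqref{osmalljgeq1}, fix $j\ge1$; then $j\le\varepsilon m$ for all large $m$, and the main bound gives $\exptn A^j(1-A)^m\le\gamma\,\frac{j^jm^m}{(m+j)^{m+j}}\,\overline F(\log m-\log j)$. Since $j$ is fixed, $\overline F(\log m-\log j)\sim\overline F(\log m)$ by \eqref{assregvar}, while $\frac{j^jm^m}{(m+j)^{m+j}}=j^j\bigl(1+\tfrac jm\bigr)^{-(m+j)}\to j^je^{-j}\cdot 0$? — no: $(1+j/m)^{-(m+j)}\to e^{-j}$, so the prefactor tends to $j^je^{-j}$, a nonzero constant. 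That is not enough on its own, so instead I would observe directly that $A^j\le A^j$ and $\exptn A^j(1-A)^m\le\exptn(1-A)^{m}$ is too crude; the correct route for the fixed-$j$ case is to redo the split-integral argument keeping the extra factor $A^j$, which on the peak region $\xi\approx\log m$ is of order $(j/m)^j$, producing an extra $m^{-j}\to0$ factor relative to $\overline F(\log m)$. Concretely, $A^j\le e^{-j\xi}$ for $\xi\ge0$ and $\xi^*=\log m\to\infty$, so on the dominant range $A^j(1-A)^m\le e^{-j\xi}\le m^{-j}e^{-j(\xi-\xi^*)}$, and repeating the estimate of the previous paragraph gives $\exptn A^j(1-A)^m\le\gamma' m^{-j}\overline F(\log m)=o(\overline F(\log m))$, as claimed. \halmos
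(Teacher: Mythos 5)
Your argument is correct in substance but takes a genuinely different route from the paper's. The paper stays on the $A$-scale: it splits $\exptn A^j(1-A)^m$ into the block $\{A\le Kj/3m\}$, which it bounds by the maximum $j^jm^m/(m+j)^{m+j}$ of $x^j(1-x)^m$ times $\prob(A\le Kj/3m)$, plus a sum over slices $A\in\bigl(kj/3m,(k+1)j/3m\bigr]$ controlled by a geometric-type series exactly as in the $j=0$ case (Lemma \ref{l:1.exp}); there long-tailedness is only needed in the crude form $\overline F(x-y)\le\gamma e^{y}\overline F(x)$, because the shifts are logarithmic in the slice index. You instead pass to the $\xi$-scale and prove a Laplace-type envelope: with $g(\xi)=m\xi-(m+j)\log(1+e^{\xi})$ and $\xi^*=\log(m/j)$, concavity of $g$ plus the curvature bound $|g''|\asymp j\ge 1$ within unit distance of the peak (valid uniformly for $1\le j\le\varepsilon m$ with $\varepsilon$ small) gives $g(\xi^*\pm 1)-g(\xi^*)\le-\delta$, and the chord inequality then yields $A^j(1-A)^m\le C\,\frac{j^jm^m}{(m+j)^{m+j}}e^{-\delta|\xi-\xi^*|}$; integrating this envelope against $F$ finishes the job. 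Your version buys a single transparent pointwise estimate in place of the paper's discretisation, at the price that the exponential weight now has shifts linear in the distance, so you need the comparison $\overline F(x-y)\le\gamma_c e^{cy}\overline F(x)$ for some $c<\delta$ rather than the $c=1$ version -- still a standard consequence of long-tailedness, so the approach goes through.

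Two slips, both reparable. First, your stated tool $h(x):=\sup_{y\le x}\overline F(y-1)/\overline F(y)\to 1$ is false as written: that supremum is at least $1$ and nondecreasing in $x$. What you actually need, and what \eqref{assregvar} does provide, is that for every $c>0$ there is $\gamma_c<\infty$ with $\overline F(x-y)\le\gamma_c e^{cy}\overline F(x)$ for all $x$ and $y>0$ (since $\overline F(x)e^{cx}\to\infty$ for long-tailed $F$); applying it with $c<\delta$ makes $\sum_{k\ge0}e^{-\delta k}\overline F(\xi^*-k-1)=O(\overline F(\xi^*))$ and closes the estimate. Second, in the derivation of \eqref{osmalljgeq1} you miscompute the prefactor: $\frac{j^jm^m}{(m+j)^{m+j}}=\bigl(\frac{j}{m+j}\bigr)^j\bigl(1+\frac{j}{m}\bigr)^{-m}\sim (j/m)^je^{-j}\to 0$ for fixed $j$, not $j^je^{-j}$. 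With the correct algebra, the main bound together with $\overline F(\log m-\log j)\sim\overline F(\log m)$ already gives \eqref{osmalljgeq1}, so your detour via $A^j\le e^{-j\xi}$ is unnecessary; as written it is also incomplete, since that bound is only useful on the region $\xi\gtrsim\xi^*$ and the complementary region again requires the $(1-A)^m$ factor.
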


\begin{proof}[Proof of Lemma \ref{l:1.exp}]
Since, for any fixed $\varepsilon>0$,
$$
\exptn \bigl((1-A)^{m+1};\ A>\varepsilon\bigr)\ \le\ (1-\varepsilon)^{m+1}
$$
is exponentially decreasing as $m\to\infty$, the asymptotic behaviour of the right-hand side
in \eqref{rep.Z.m.EA} is determined by
the tail behavior of $A$ near $0$. Notice that, for $0<a<b<1$,
\begin{eqnarray}\label{startid}
\prob\left(A\in(a, b]\right) &=&
\prob\left(\log \frac{1-A}{A}\in \left[\log \frac{1-b}{b},\ \log \frac{1-a}{a}\right)\right)
\nonumber\\
&=& \prob\bigl(\xi\in [\log(1/b-1),\ \log(1/a-1))\bigr).
\end{eqnarray}
Hence, for any fixed $c>0$, we have
\begin{eqnarray*}
\exptn (1-A)^m &\ge& \exptn [(1-A)^m;\ A\le c/m] \\
&\ge& (1-c/m)^m\prob( A\le c/m)\\
&=& (1-c/m)^m\overline F(\log(m/c-1)).
\end{eqnarray*}
It follows from the long-tailedness of the distribution $F$ of $\xi$ that
the right-hand side of above equation is asymptotically equivalent to
$e^{-c}\overline F(\log m)$ as $m\to\infty$.
Letting $c\downarrow 0$ we complete the proof of the lower bound
\begin{eqnarray*}
\exptn (1-A)^m &\ge& (1+o(1))\overline F(\log m)\quad\mbox{as }m\to\infty.
\end{eqnarray*}

To obtain the matching upper bound, let us consider the following decomposition
which is valid for all integer $K\in[1,[m/2]-1]$:
\begin{eqnarray*}
\lefteqn{\exptn (1-A)^m}\\
 &=& \exptn\biggl[(1-A)^m;\ A\le \frac{K}{m}\biggr]+
\sum_{k=K}^{[m/2]-1} \exptn \biggl[(1-A)^m;\
A\in\biggl(\frac{k}{m},\frac{k+1}{m}\biggr]\biggr]
+\exptn\biggl[(1-A)^m;\ A>\frac{[m/2]}{m}\biggr]\\
&\le& \prob\biggl(A\le \frac{K}{m}\biggr) 
+\sum_{k=K}^{[m/2]-1}\biggl(1-\frac{k}{m}\biggr)^m 
\prob\biggl(A\le\frac{k+1}{m}\biggr)
+\biggl(1-\frac{[m/2]}{m}\biggr)^m\\
&\le& \overline F\biggl(\log\biggl(\frac{m}{K}-1\biggr)\biggr)
+\sum_{k=K}^{[m/2]-1} e^{-k} \overline F\biggl(\log\biggl(\frac{m}{k+1}-1\biggr)\biggr)
+\biggl(1-\frac{[m/2]}{m}\biggr)^m.
\end{eqnarray*}
Let us show that the series in the middle term in the last line is
negligible for large values of $K$. Indeed, firstly,
$$
\frac{m}{k+1}-1\ \ge\ \frac{1}{2}\frac{m}{k+1}
\quad\mbox{for all }k\le \frac{m}{2}-1
$$
and hence
\begin{eqnarray*}
\sum_{k=K}^{[m/2]-1} e^{-k} 
\overline F\biggl(\log\biggl(\frac{m}{k+1}-1\biggr)\biggr) &\le&
\sum_{k=K}^{[m/2]-1} e^{-k} \overline F(\log m-\log(k+1)-\log 2).
\end{eqnarray*}
Since the distribution $F$ is assumed long-tailed, there exists
a constant $\gamma<\infty$ such that
$\overline F(x-y)\le\gamma e^y \overline F(x)$ for all $x$, $y>0$. Therefore,
\begin{eqnarray}\label{inner.sum.1}
\sum_{k=K}^{[m/2]-1} e^{-k} 
\overline F\biggl(\log\biggl(\frac{m}{k+1}-1\biggr)\biggr) &\le&
\gamma\overline F(\log m)\sum_{k=K}^\infty e^{-k}
e^{\log(k+1)+\log 2}\nonumber\\
&\le& \varepsilon(K)\overline F(\log m)
\end{eqnarray}
where
\begin{eqnarray*}
\varepsilon(K) &:=&
\gamma\sum_{k=K}^\infty e^{-k} e^{\log(k+1)+\log 2}\ \to\ 0
\quad\mbox{as }K\to\infty.
\end{eqnarray*}
Hence we conclude that
\begin{eqnarray*}
\exptn (1-A)^m &\le& \overline F(\log(m/K-1))
+\varepsilon(K)\overline F(\log m)+O(1/2^m)
\quad\mbox{as }m\to\infty.
\end{eqnarray*}
Due to the long-tailedness of $F$ this implies that, for any fixed $K$,
\begin{eqnarray*}
\exptn (1-A)^m &\le&
(1+o(1))\overline F(\log m)+\varepsilon(K)\overline F(\log m)
\quad\mbox{as }m\to\infty.
\end{eqnarray*}
Since $\varepsilon(K)\to 0$ as $K\to\infty$, the proof is complete.
\end{proof}

\begin{proof}[Proof of Lemma \ref{l:jm.upper}]
There exist $K\in\mathbb N$ and $\varepsilon_1>0$
such that the following inequalities hold
\begin{eqnarray}\label{log.lin}
\log(k+1) &\le& k/6\quad\mbox{for all }k\ge K
\end{eqnarray}
and
\begin{eqnarray}\label{comp.e}
\biggl(1-\frac{j}{m}\biggr)^m &\ge& \frac{1}{3^j}
\quad\mbox{for all }m>K\mbox{ and }j\le\varepsilon_1 m.
\end{eqnarray}
Similar to the case $j=0$ considered in the proof of Lemma \ref{l:1.exp},
we make use of the following decomposition:
\begin{eqnarray}\label{E1.E2}
\exptn A^j(1-A)^m &=&
\exptn\biggl[A^j(1-A)^m;\ A\le\frac{Kj}{3m}\biggr]+
\sum_{k=K}^{[3m/j]} \exptn\biggl[A^j(1-A)^m;\
A\in\biggl(k\frac{j}{3m},(k+1)\frac{j}{3m}\biggr]\biggr]\nonumber\\
&=:& E_1+E_2.
\end{eqnarray}
The maximum of the function $x^j(1-x)^m$ over the interval $[0,1]$
is attained at point $j/(m+j)$ and is equal to $j^jm^m/(m+j)^{m+j}$.
Therefore, for some $\varepsilon=\varepsilon(K)\le\varepsilon_1$,
\begin{eqnarray}\label{E1}
E_1 &\le& \frac{j^jm^m}{(m+j)^{m+j}} \prob\biggl(A\le\frac{Kj}{3m}\biggr)\nonumber\\
&=& \frac{j^jm^m}{(m+j)^{m+j}}
\overline F\biggl(\log\biggl(\frac{3m}{Kj}-1\biggl)\biggl)\nonumber\\
&\le& \gamma_1\frac{j^jm^m}{(m+j)^{m+j}} \overline F(\log m-\log j)
\quad\mbox{for some }\gamma_1<\infty\mbox{ and all }j\le\varepsilon m,
\end{eqnarray}
owing to the long-tailedness of $F$. Further, the series on the right
hand side of \eqref{E1.E2} possesses the following upper bound
\begin{eqnarray*}
E_2 &\le&
\sum_{k=K}^{[3m/j]} (k+1)^j\biggl(\frac{j}{3m}\biggr)^j
\biggl(1-\frac{kj}{3m}\biggr)^m \prob\biggl(A\le(k+1)\frac{j}{3m}\biggr)\\
&\le& \biggl(\frac{j}{3m}\biggr)^j\
\sum_{k=K}^{[3m/j]} (k+1)^j e^{-kj/3}
\overline F(\log(3m/(k+1)j-1))
\end{eqnarray*}
because $(1-kj/3m)^m\le e^{-kj/3}$.
Let us now bound the latter series.
It follows from the inequality \eqref{log.lin} that
\begin{eqnarray*}
(k+1)^j e^{-kj/3} &=& e^{j(\log(k+1)-k/3)}
\ \le\ e^{-jk/6}\quad\mbox{for all }k\ge K.
\end{eqnarray*}
Then, using arguments similar to those in \eqref{inner.sum.1},
\begin{eqnarray}
E_2 &\le& \biggl(\frac{j}{3m}\biggr)^j
\sum_{k=K}^{[3m/j]} e^{-jk/6} \overline F(\log(3m/(k+1)j-1))\label{citedlater2}\\
&\le& \gamma_2\biggl(\frac{j}{3m}\biggr)^j\overline F(\log m-\log j)
\quad\mbox{for some }\gamma_2<\infty,\nonumber
\end{eqnarray}
which implies the result due to the inequalities \eqref{E1} and
\begin{eqnarray*}
\frac{j^jm^m}{(m+j)^{m+j}}\ =\
\biggl(\frac{j}{m}\biggr)^j\biggl(1-\frac{j}{m+j}\biggr)^{m+j}
&\ge& \biggl(\frac{j}{3m}\biggr)^j
\end{eqnarray*}
which is guarantied by \eqref{comp.e}.
\end{proof}

\begin{proof}[Proof of Theorem~\ref{thm:tail:fixed:generation:size}]
We prove the statement by induction in $n\ge 1$.
The assertion for $n=1$ follows from the representation \eqref{rep.Z.m.EA}
and Lemma \ref{l:1.exp}.
Assume that the assertion of Theorem~\ref{thm:tail:fixed:generation:size}
is valid for some $n\ge 1$. Let us show that then it follows for $n+1\ge 2$.
Our aim is to obtain the tail asymptotics of the distribution of
\alns{
Z_{n+1}\ =\ \sum_{i=1}^{Z_n+1} B_{n+1,i},
}
where $(B_{n+1,i}, i\ge 1)$ are independent copies of a geometric random
variable $B_{n+1}$ with success probability $A_n$ (its probability mass function
is specified in \eqref{eq:prob:mass:func:Bn}) and independent of $Z_n$
conditioned on $\mathcal{A}$. Then the following representation holds
\begin{eqnarray}\label{decomp}
\prob(Z_{n+1}>m) &=& \sum_{k=0}^\infty
\prob \bigg( \sum_{j=1}^{k+1} B_{n+1,j}>m, Z_n=k\bigg)\nonumber\\
&=& \sum_{k=0}^\infty
\exptn \bigg[ \prob \bigg(\sum_{j=1}^{k+1} B_{n+1,j}>m\Big|
\mathcal{A} \bigg) \bigg] \prob(Z_n=k),
\end{eqnarray}
where we have conditioned on $\mathcal{A}$ and used the fact that $Z_n$ and
$(B_{n+1,i}, i\ge 1)$ are independent conditioned on $\mathcal{A}$.

We start with the proof of the upper bound.
For that, let us split the summation in \eqref{decomp} into three parts,
from $0$ to $K$, from $K+1$ to $\varepsilon m-1$ and from $\varepsilon m$ to $\infty$
where integer $K$ is chosen large enough and real $\varepsilon>0$ small enough.
This splitting together with non-negativity of the $B$'s implies that
\begin{eqnarray*}
\lefteqn{\prob(Z_{n+1}>m)} \\
&\le& \exptn \bigg[ \prob \bigg( \sum_{j=1}^K B_{n+1,j}>m\Big|
\mathcal{A} \bigg) \bigg] \prob(Z_n<K)\\
&&\hspace{42mm}+ \sum_{k=K}^{\varepsilon m}
\exptn \bigg[ \prob \bigg( \sum_{j=1}^{k+1} B_{n+1,j}>m\Big|
\mathcal{A} \bigg) \bigg] \prob(Z_n=k) + \prob(Z_n>\varepsilon m)\\
&\le& \exptn \bigg[ \prob \bigg( \sum_{j=1}^K B_{n+1,j}>m\Big|
\mathcal{A} \bigg)\bigg]
+ \sum_{k=K}^{\varepsilon m}\exptn
\bigg[ \prob \bigg( \sum_{j=1}^{k+1} B_{n+1,j}>m\Big|
\mathcal{A} \bigg) \bigg] \prob(Z_n=k) + \prob(Z_n>\varepsilon m).
\end{eqnarray*}
By the induction hypothesis and long-tailedness of $F$,
for any fixed $\varepsilon$,
\begin{eqnarray*}
\prob(Z_n>\varepsilon m) &\sim& n\overline F(\log(\varepsilon m))
\ \sim\ n\overline F(\log m)\quad\mbox{as }m\to\infty.
\end{eqnarray*}
So it is left to show that, for any fixed $K$,
\begin{eqnarray}\label{est.upper.44}
\exptn \bigg[ \prob \bigg( \sum_{j=1}^K B_{n+1,j}>m\Big|
\mathcal{A} \bigg)\bigg]
&\sim& \overline F(\log m)\quad\mbox{as }m\to\infty,
\end{eqnarray}
and that, for any $\delta>0$, there exist a sufficiently large $K$
and a sufficiently small $\varepsilon>0$ such that
\begin{eqnarray}\label{est.upper.4}
\sum_{k=K}^{\varepsilon m}\exptn \bigg[ \prob \bigg( \sum_{j=1}^{k+1} B_{n+1,j}>m\Big|
\mathcal{A} \bigg) \bigg] \prob(Z_n=k) &\le& \delta\overline F(\log m)
\quad\mbox{for all sufficiently large }m.
\end{eqnarray}
We start with proving \eqref{est.upper.4}.

Let $\xi(A)$ be a Bernoulli random variable with success probability $A$
and $S_{m+k}(A)$ be the sum of $m+k$ independent copies of $\xi(A)$.
It follows from the representation \eqref{reprenegbinomial} that
\begin{eqnarray*}
\prob(B_1+\ldots+B_k>m\mid\mathcal{A}) &=&
\prob(S_{m+k}(A)\le k-1)\\
&\le& (\exptn (e^{-\beta\xi(A)}))^{m+k}e^{\beta(k-1)}\\
&=& (1-A+e^{-\beta}A)^{m+k}e^{\beta(k-1)}, \quad\mbox{for all }\beta>0.
\end{eqnarray*}
The minimal value of the right hand side is attained for $\beta$
such that $e^{-\beta}=\frac{(1-A)(k-1)}{A(m+1)}$, hence
\begin{eqnarray*}
\prob(B_1+\ldots+B_k>m\mid\mathcal{A}) &\le&
\frac{(m+k)^{m+k}}{(m+1)^{m+1}(k-1)^{k-1}}A^{k-1}(1-A)^{m+1}.
\end{eqnarray*}
This allows us to conclude from Lemma \ref{l:jm.upper} that,
for $k\le\varepsilon m$,
\begin{eqnarray*}
\prob(B_1+\ldots+B_k>m) &\le&
\frac{(m+k)^{m+k}}{(m+1)^{m+1}(k-1)^{k-1}}\exptn A^{k-1}(1-A)^{m+1}\\
&\le& \gamma\overline F(\log(m+1)-\log(k-1)).
\end{eqnarray*}
Therefore,
\begin{eqnarray*}
\sum_{k=K}^{\varepsilon m}\prob(B_1+\ldots+B_{k+1}>m)\prob(Z_n=k)
&\le& \gamma\sum_{k=K}^{\varepsilon m} \overline F(\log(m+1)-\log k)\prob(Z_n=k).
\end{eqnarray*}
Representing $\prob(Z_n=k)$ as the difference 
$\prob(Z_n>k-1)-\prob(Z_n>k)$ and rearranging the sum on the right hand side 
we conclude that this sum is not greater than
\begin{eqnarray*}
\lefteqn{\overline F(\log(m+1)-\log K)\prob(Z_n>K-1)}\\
&&+ \sum_{k=K}^{\varepsilon m-1} \bigl(
\overline F(\log(m+1)-\log (k+1))-\overline F(\log(m+1)-\log k)\bigr)
\prob(Z_n>k).
\end{eqnarray*}
Then the induction hypothesis yields an upper bound, for some $\gamma_1<\infty$,
\begin{eqnarray*}
\lefteqn{\sum_{k=K}^{\varepsilon m}\prob(B_1+\ldots+B_{k+1}>m)\prob(Z_n=k)}\\
&\le& \gamma\overline F(\log(m+1)-\log K)\prob(Z_n>K-1)\\
&&+ \gamma_1\sum_{k=K}^{\varepsilon m-1} \bigl(
\overline F(\log(m+1)-\log (k+1))-\overline F(\log(m+1)-\log k)\bigr)
\overline F(\log k).
\end{eqnarray*}
Due to the long-tailedness of $F$, for any $\delta>0$ there exists
a sufficiently large $K$ such that the first term on the right hand side
is not greater than $\delta\overline F(\log m)$, for all sufficiently large $m$.
After rearranging we conclude that the sum on the right hand side
is not greater than
\begin{eqnarray}\label{sum.2}
\lefteqn{\overline F(\log(m+1)-\log (\varepsilon m))
\overline F(\log(\varepsilon m-1))}\nonumber\\
&&\hspace{30mm}+\sum_{k=K+1}^{\varepsilon m-1} \overline F(\log(m+1)-\log k)
\bigl(\overline F(\log(k-1))-\overline F(\log k)\bigr).
\end{eqnarray}
Since $F$ is long-tailed, the first term here is asymptotically equivalent to
\begin{eqnarray*}
\overline F(\log(1/\varepsilon))\overline F(\log m)
\quad\mbox{as }m\to\infty,
\end{eqnarray*}
so it is not greater than $\delta\overline F(\log m)$
for all sufficiently large $m$ provided
$\overline F(\log(1/\varepsilon))\le\delta/2$.
The sum in \eqref{sum.2} equals
\begin{eqnarray*}
\sum_{k=K+1}^{\varepsilon m-1} \overline G\biggl(\frac{m+1}{k}\biggr)G(k-1,k],
\end{eqnarray*}
where the distribution $G$ is defined via its tail as
$\overline G(x)=\overline F(\log x)$, and can be 
bounded by the integral 
\begin{eqnarray*}
\int_K^{\varepsilon m} \overline G(m/z)G(dz) &=&
\prob(e^{\xi_1+\xi_2}>m;\ e^{\xi_2}\in(K,\varepsilon m])\\
&=& \prob(\xi_1+\xi_2>\log m;\ \xi_2\in(\log K,\ \log m-\log(1/\varepsilon)]).
\end{eqnarray*}
Since the distribution $F$ is assumed to be subexponential,
we can choose a sufficiently large $K$ and a sufficiently small
$\varepsilon>0$ such that the latter probability
is not greater than $\delta\overline F(\log m)$ for all sufficiently large $m$,
see \cite[Theorem 3.6]{FKZ}, which completes the proof of \eqref{est.upper.4}.

To complete the proof of the upper bound it now suffices to show \eqref{est.upper.44}.
This follows immediately from the representation \eqref{reprenegbinomial},
the asymptotics \eqref{osmalljgeq1} and Lemma \ref{l:1.exp}.

We will obtain now the matching lower bound.
For that, let us split the sum in \eqref{decomp} into two parts,
from $0$ to $cm$ and from $cm+1$ to $\infty$
where $c$ is a large number sent to infinity later on.
This splitting implies that
\begin{eqnarray}\label{est}
\prob(Z_{n+1}>m) &\ge& \sum_{k=0}^{cm}
\exptn \big[ \prob(B_{n+1}>m\mid \mathcal{A})\big]\prob(Z_n=k)
+\sum_{cm+1}^\infty
\exptn \bigg[ \prob \bigg( \sum_{j=1}^{k+1} B_{n+1,j}>m\Big|
\mathcal{A} \bigg) \bigg] \prob(Z_n=k)\nonumber\\
&\ge& \exptn \big[ \prob(B_{n+1}>m\mid \mathcal{A})\big]\prob(Z_n\le cm)
+\exptn \bigg[ \prob \bigg( \sum_{j=1}^{cm} B_{n+1,j}>m\Big|
\mathcal{A} \bigg) \bigg] \prob(Z_l>cm),
\end{eqnarray}
since all the $B$'s are non-negative. By Lemma \ref{l:1.exp},
\begin{eqnarray}\label{est.1}
\exptn \big[ \prob(B_{n+1}>m\mid \mathcal{A})\big]\prob(Z_n\le cm)
&\sim& \overline F(\log m)\quad\mbox{as }m\to\infty.
\end{eqnarray}
Further, by the law of large numbers,
\begin{eqnarray*}
\prob \bigg( \sum_{j=1}^{cm} B_{n+1,j}>m\Big|\mathcal{A} \bigg)
&\stackrel{\rm a.s.}\to& 1\quad\mbox{as }c\to\infty.
\end{eqnarray*}
Hence, the dominated convergence theorem allows us to conclude that
\begin{eqnarray}\label{est.2}
\exptn \bigg[ \prob \bigg( \sum_{j=1}^{cm} B_{n+1,j}>m\Big|
\mathcal{A} \bigg) \bigg] &\to& 1\quad\mbox{as }c\to\infty.
\end{eqnarray}
Finally, by the induction hypothesis and long-tailedness of $F$, for any fixed $c$,
\begin{eqnarray}\label{est.3}
\prob(Z_n>cm) &\sim& n\overline F(\log(cm))
\ \sim\ n\overline F(\log m)\quad\mbox{as }m\to\infty.
\end{eqnarray}
Substituting \eqref{est.1}--\eqref{est.3} into \eqref{est}
and letting $c\to\infty$ we conclude the induction step for the lower bound.
\end{proof}

\section{Proof of  the lower bound, Theorem~\ref{thm:tail:generation:size.lower}}
\label{sec:lower}

Note that, by the strong law of large numbers,
for any fixed $\varepsilon>0$,
\begin{eqnarray}\label{C_S}
\inf_{n\ge 1}\prob(C_S(c,\varepsilon,k,n)\mbox{ for all }k\le n)
&\to& 1\quad\mbox{as }c\to\infty,
\end{eqnarray}
where
\begin{eqnarray*}
C_S(c,\varepsilon,k,n) &:=& \{|S_{k,n}-(n-k+1)\exptn\xi|\le c+\varepsilon(n-k+1)\}
\end{eqnarray*}
and $S_{k,n}= \xi_k+\ldots+\xi_n$.

We show that, under the long-tailedness
condition \eqref{assregvar}, the most probable way for a big value of $Z_n$ to occur 
is due to atypical random environment
when one of the following events occurs, $k\le n-1$:
\begin{eqnarray*}
C_A(k,n) &:=& \Bigl\{A_k\le \frac{c_1}{M(m,k,n)},\
C_S(c_2,\varepsilon,j,n-1)\mbox{ for all }j\in[k+1,n-1]\Bigr\},
\end{eqnarray*}
where
\begin{eqnarray*}
M(m,k,n) &:=& m e^{\varepsilon(n-1-k)+c_2}\prod_{j=k+1}^{n-1}\frac{1}{a_{A_j}}
\ =\  m e^{\varepsilon(n-1-k)+c_2-S_{k+1,n-1}},
\end{eqnarray*}
$a_A:=\exptn \{B\mid A\}=1/A-1=e^\xi$,
$c_1$, $c_2$, $\varepsilon>0$ are fixed, $c_2$ will be sent to infinity later on,
while $c_1$ and $\varepsilon$ will be sent to $0$.
Since $A$ is bounded by $\widehat A<1$, $a_A$ is bounded away from $0$
by $1/\widehat A-1$.

Let us bound from below the probability of the union of events
$C_A(k,n)$.
We start with the following lower bound
\begin{eqnarray}\label{lower.for.union}
\prob\biggl(\bigcup_{k=0}^{n-1} C_A(k,n)\biggr)
&\ge& \sum_{k=0}^{n-1}\prob(C_A(k,n))
-\sum_{k\not=l}\prob(C_A(k,n)\cap C_A(l,n)).
\end{eqnarray}
On the event $C_S(c_2,\varepsilon,k+1,n-1)$ we have
\begin{eqnarray}\label{lln}
a(n-1-k)\ \le\ \varepsilon(n-1-k)+c_2-S_{k+1,n-1}
\ \le\ 2c_2+(2\varepsilon+a)(n-1-k)
\end{eqnarray}
and hence
\begin{eqnarray*}
\sum_{k=0}^{n-1}\prob(C_A(k,n)) &\ge& \sum_{k=0}^{n-1}\prob
\biggl(A_k\le \frac{c_1}{m e^{2c_2+(2\varepsilon+a)(n-1-k)}},\
C_S(c_2,\varepsilon,j,n-1) \mbox{ for all }j\in[k+1,n-1]\biggr)\\
&=& \sum_{k=0}^{n-1}\prob
\biggl(A_k\le \frac{c_1}{m e^{2c_2+(2\varepsilon+a)(n-1-k)}}\biggr)
\prob\bigl(C_S(c_2,\varepsilon,j,n-1) \mbox{ for all }j\in[k+1,n-1]\bigr)\\
&\ge& \prob\bigl(C_S(c_2,\varepsilon,j,n-1)\mbox{ for all }j\in[1,n-1]\bigr)
\sum_{k=0}^{n-1}
\prob\biggl(A_k\le \frac{c_1}{m e^{2c_2+(2\varepsilon+a)(n-1-k)}}\biggr),
\end{eqnarray*}
and
\begin{eqnarray*}
\sum_{k\not=l}\prob(C_A(k,n)\cap C_A(l,n)) &\le&
\sum_{k\not=l}\prob\biggl(A_k\le \frac{c_1}{m e^{a(n-1-k)}},\
A_l\le \frac{c_1}{m e^{a(n-1-l)}}\biggr)\\
&=& \sum_{k\not=l}\prob\biggl(A_k\le \frac{c_1}{m e^{a(n-1-k)}}\biggr)
\prob\biggl(A_l\le \frac{c_1}{m e^{a(n-1-l)}}\biggr)\\
&\le& \biggl(\sum_{k=0}^{n-1}
\prob\biggl(A_k\le \frac{c_1}{m e^{a(n-1-k)}}\biggr)\biggr)^2.
\end{eqnarray*}
As follows from \eqref{startid},
\begin{eqnarray*}
\sum_{k=0}^{n-1}
\prob\biggl(A_k\le \frac{c_1}{m e^{2c_2+(2\varepsilon+a)(n-1-k)}}\biggr)
&=& \sum_{k=0}^{n-1}
\prob\biggl(\xi\ge\log\biggl(\frac{m e^{2c_2+(2\varepsilon+a)k}}{c_1}
-1\biggr)\biggr)\\
&\ge& \sum_{k=0}^{n-1} \overline F(\log m +2c_2+(2\varepsilon+a)k-\log c_1)\\
&\ge& \frac{1}{2\varepsilon+a}
\int_{\log m+2c_2-\log c_1}^{\log m+2c_2-\log c_1+(2\varepsilon+a)n}
\overline F(x)dx
\end{eqnarray*}
since the tail function $\overline F(x)$ is decreasing. Therefore,
\begin{eqnarray*}
\sum_{k=0}^{n-1}
\prob\biggl(A_k\le \frac{c_1}{m e^{2c_2+(2\varepsilon+a)(n-1-k)}}\biggr)
&\ge& \frac{1+o(1)}{2\varepsilon+a}
\int_{\log m}^{\log m+(2\varepsilon+a)n} \overline F(x)dx
\end{eqnarray*}
as $m\to\infty$ uniformly for all $n\ge1$
because the distribution $F$ is long-tailed. Similarly,
\begin{eqnarray*}
\sum_{k=0}^{n-1}
\prob\biggl(A_k\le \frac{c_1}{m e^{a(n-1-k)}}\biggr)
&\le& \frac{1+o(1)}{a} \int_{\log m}^{\log m+na} \overline F(x)dx.
\end{eqnarray*}
Therefore,
\begin{eqnarray*}
\sum_{k=0}^{n-1}\prob(C_A(k,n)) &\ge& \frac{1+o(1)}{2\varepsilon+a}
\int_{\log m}^{\log m+na} \overline F(x)dx
\prob\bigl(C_S(c_2,\varepsilon,j,n-1)\mbox{ for all }j\in[1,n-1]\bigr),
\end{eqnarray*}
and
\begin{eqnarray*}
\sum_{k\not=l}\prob(C_A(k,n)\cap C_A(l,n)) &=&
O\biggl(\int_{\log m}^{\log m+na} \overline F(x)dx\biggr)^2
\end{eqnarray*}
as $m\to\infty$ uniformly for all $n\ge1$.
Substituting these bounds into \eqref{lower.for.union} and applying \eqref{C_S}, for any fixed $\varepsilon>0$
we can conclude the following lower bound,
\begin{eqnarray}\label{lower.for.union.final}
\prob\biggl(\bigcup_{k=0}^{n-1} C_A(k,n)\biggr)
&\ge& \frac{g(c_2)+o(1)}{2\varepsilon+a}
\int_{\log m}^{\log m+na} \overline F(x)dx
\end{eqnarray}
as $m\to\infty$ uniformly for all $n\ge1$,
where $g(c_2)\to 1$ as $c_2\to\infty$.

As above, conditioning on $\mathcal A$ yields
\begin{eqnarray}\label{Zn.A.C}
\prob(Z_n>m) &=& \exptn[\prob(Z_n>m\mid \mathcal A)]\nonumber\\
&\ge& \exptn[\prob(Z_n>m\mid \mathcal A);\ C_A(n)],
\end{eqnarray}
where $C_A(n):=\bigcup_{k=0}^{n-1} C_A(k,n)$.
Then, owing to \eqref{lower.for.union.final}, for the proof of \eqref{eq:stat1} it suffices to show that
\begin{eqnarray}\label{Zn.A.C.1}
\liminf_{m\to\infty}\inf_{C_A(n)}\prob(Z_n>m\mid \mathcal A) &\ge& e^{-c_1}
\quad\mbox{uniformly for all }n\ge 1.
\end{eqnarray}

Hence we are left with the proof of \eqref{Zn.A.C.1}.
Since the event $C_A(n)$ is the union of events $C_A(k,n)$, $k\le n-1$, 
the probability of the event
\begin{eqnarray*}
C_B(k,n) &:=&
\biggl\{B_{k+1,1}>m e^{c_2+\varepsilon(n-1-k)}\prod_{j=k+1}^{n-1}\frac{1}{a_{A_j}}\biggr\},
\end{eqnarray*}
conditionally on $C_A(n)$, possesses the following asymptotic lower bound
\begin{eqnarray*}
\prob(C_B(k,n)\mid C_A(n)) &\ge& 
(1-A)^{m e^{c_2+\varepsilon(n-1-k)}\prod_{j=k+1}^{n-1}\frac{1}{a_{A_j}}}\\
&\ge& \biggl(1-\frac{c_1}{m e^{c_2+\varepsilon(n-1-k)}\prod_{j=k+1}^{n-1}\frac{1}{a_{A_j}}}
\biggr)^{m e^{c_2+\varepsilon(n-1-k)}\prod_{j=k+1}^{n-1}\frac{1}{a_{A_j}}}\\
&\to& e^{-c_1}\quad\mbox{as }m\to\infty.
\end{eqnarray*}
Therefore, it only remains to show that
\begin{eqnarray}\label{l.b.ind.conv}
\inf_{C_A(k,n)}\prob(Z_n>m\mid C_B(k,n),\ \mathcal A) &\to& 1
\end{eqnarray}
as $m\to\infty$ uniformly for all $k\le n-1$ and $n\ge 1$.

To prove this convergence, let us note that, conditioned on $\mathcal A$,
\begin{eqnarray*}
\prob\bigl[Z_j\le la_{A_{j-1}}e^{-\varepsilon}
\big\mid Z_{j-1}=l,\mathcal A\bigr] &=&
\prob\bigl[B_{j,1}+\ldots+B_{j,l+1}\le la_{A_{j-1}}e^{-\varepsilon}
\big\mid \mathcal A\bigr]\\
&\le& \prob\biggl[\frac{B_{j,1}}{a_{A_{j-1}}}+\ldots+\frac{B_{j,l}}{a_{A_{j-1}}}
\le le^{-\varepsilon} \bigg\mid \mathcal A\biggr]\\
&=& \prob\biggl[\biggl(e^{-\varepsilon/2}-\frac{B_{j,1}}{a_{A_{j-1}}}\biggr)
+\ldots+\biggl(e^{-\varepsilon/2}-\frac{B_{j,l}}{a_{A_{j-1}}}\biggr)
\ge l(e^{-\varepsilon/2}-e^{-\varepsilon}) \bigg\mid \mathcal A\biggr]\\
&\le& \prob\biggl[\biggl(e^{-\varepsilon/2}-\frac{B_{j,1}}{a_{A_{j-1}}}\biggr)
+\ldots+\biggl(e^{-\varepsilon/2}-\frac{B_{j,l}}{a_{A_{j-1}}}\biggr)
\ge l e^{-\varepsilon}\varepsilon/2 \bigg\mid \mathcal A\biggr].
\end{eqnarray*}
Applying the exponential Markov inequality, we obtain the following upper bound,
for all $\lambda>0$,
\begin{eqnarray*}
\prob\bigl[Z_j\le l a_{A_{j-1}}e^{-\varepsilon}
\big\mid Z_{j-1}=l,\mathcal A\bigr] &\le&
e^{-l\lambda e^{-\varepsilon}\varepsilon/2}
\exptn e^{\lambda\bigl(\bigl(e^{-\varepsilon/2}-\frac{B_{j,1}}{a_{A_{j-1}}}\bigr)
+\ldots+\bigl(e^{-\varepsilon/2}-\frac{B_{j,l}}{a_{A_{j-1}}}\bigr)\bigr)}.
\end{eqnarray*}
Since
\begin{eqnarray*}
\exptn\Bigl[e^{\lambda\bigl(e^{-\varepsilon/2}-\frac{B}{a_A}\bigr)}
\Big\mid A\Bigr]
&=& e^{\lambda(1-\varepsilon)}\frac{A}{1-(1-A)e^{-\lambda\frac{A}{1-A}}}\\
&=& e^{\frac{\lambda}{1-A}-\lambda\varepsilon}
\frac{A}{e^{\lambda\frac{A}{1-A}}-(1-A)}
\ \le\ e^{\frac{\lambda}{1-A}-\lambda\varepsilon}
\frac{1}{\frac{\lambda}{1-A}+1}
\end{eqnarray*}
and $A$ is bounded away from $1$,
there exists a sufficiently small $\lambda_0>0$ such that
\begin{eqnarray*}
\exptn\Bigl[e^{\lambda_0\bigl(e^{-\varepsilon/2}-\frac{B}{a_A}\bigr)}
\Big\mid A\Bigr]
&\le& 1\quad\mbox{for all }A\in(0,\widehat{A}).
\end{eqnarray*}
Therefore,
\begin{eqnarray*}
\prob\bigl[Z_j\le l a_{A_{j-1}}e^{-\varepsilon}
\big\mid Z_{j-1}=l,\ \mathcal A\bigr] &\le&
e^{-l\delta}\quad\mbox{where }\delta=\lambda_0e^{-\varepsilon}\varepsilon/2>0.
\end{eqnarray*}
which, due to monotonicity property of the branching process $Z_n$, implies that
\begin{eqnarray*}
\prob\bigl[Z_j\le l a_{A_{j-1}}e^{-\varepsilon}
\big\mid Z_{j-1}\ge l,\ \mathcal A\bigr] &\le& e^{-l\delta}.
\end{eqnarray*}
Then the induction arguments lead to the following upper bound
\begin{eqnarray*}
\prob\biggl[Z_n\le l e^{-\varepsilon(n-1-k)}\prod_{i=k+1}^{n-1} a_{A_i}
\bigg\mid Z_{k+1}\ge l,\ \mathcal A\biggr] &\le&
\sum_{j=k+1}^{n-1} e^{-l\delta e^{-\varepsilon(j-1-k)}\prod_{i=k+1}^{j-1} a_{A_i}}.
\end{eqnarray*}
We take
\begin{eqnarray*}
l &=& m e^{c_2+\varepsilon(n-1-k)}\prod_{i=k+1}^{n-1}\frac{1}{a_{A_i}} 
\end{eqnarray*}
to conclude that
\begin{eqnarray*}
\prob(Z_n>m \mid C_B(k,n),\ \mathcal A) &\ge&
1-\sum_{j=k+1}^{n-1} e^{-m\delta e^{c_2+\varepsilon(n-1-j)}\prod_{i=j+1}^{n-1} a_{A_i}}.
\end{eqnarray*}
Due to the representation
\begin{eqnarray*}
\log e^{c_2}\prod_{i=j}^{n-1}\frac{A_i}{1-A_i} &=&
c_2+\sum_{i=j}^{n-1}\log\frac{A_i}{1-A_i}
\ =\ c_2-\sum_{i=j}^{n-1}\xi_i,
\end{eqnarray*}
we get
\begin{eqnarray*}
\prob(Z_n>m\mid C_B(k,n),\ \mathcal A) &\ge&
1-\sum_{j=k+1}^{n-1} e^{-m\delta e^{\varepsilon(n-1-j)}},
\end{eqnarray*}
for any sequence of $\xi$'s such that
\begin{eqnarray*}
c_2-\sum_{i=j}^{n-1}\xi_i &\ge& 0\quad\mbox{for all }j\in[k,n -1],
\end{eqnarray*}
which is the case on $C_S(c_2,\varepsilon,k,n-1)$ and hence on $C_A(k,n)$,
as follows from the first inequality in \eqref{lln}
for all $\varepsilon\in(0,-\exptn\xi)$.
So, we have shown \eqref{l.b.ind.conv}, and the proof of the first lower bound
in Theorem \ref{thm:tail:generation:size.lower} is complete.

The lower limit for the stationary distribution follows similar arguments
if we start with an analogue of \eqref{Zn.A.C},
\begin{eqnarray}\label{Zn.A.C.infty}
\prob(Z>m) &=& \lim_{n\to\infty}\prob(Z_n>m)\nonumber\\
&\ge& \lim_{n\to\infty}\exptn[\prob(Z_n>m\mid \mathcal A);\ C_A(n)].
\end{eqnarray}
Then, similar to \eqref{lower.for.union.final},
we may use the fact that $F_I$ is long-tailed to conclude that
\begin{eqnarray}\label{lower.for.union.final.infty}
\lim_{n\to\infty}\prob(C_A(n))
&\ge& \frac{g(c_2)+o(1)}{2\varepsilon+a} \overline F_I(\log m)
\quad\mbox{as }m\to\infty,
\end{eqnarray}
which together with \eqref{Zn.A.C.1} justifies the lower bound
for the stationary tail distribution.

\section{Proof of the upper bound, Theorem~\ref{thm:tail:generation:size.upper}}
\label{sec:upper}

Let $W_n$ be a branching process without immigration, that is, $W_0=1$ and
\begin{eqnarray*}
W_{n+1} &=& \sum_{i=0}^{W_n}B_{n+1,i}\quad\mbox{for }n\ge 0.
\end{eqnarray*}

Let $W_n^{(1)}$ be the number of particles in $Z_n$ generated by the
immigrant arriving at time $1$, $W_n^{(2)}$ be the number of particles in $Z_n$
generated by the immigrant arriving at time $2$ and so on.
All these processes extinct in a finite time and are independent being
conditioned on the environment $\mathcal A$. In addition, $W_n^{(k)}$ has 
the same distribution with $W_{n-k}$ given the same success probabilities.
By the definition of $Z_n$,
\begin{eqnarray*}
Z_n &=& W_n^{(1)}+W_n^{(2)}+\ldots+W_n^{(n)},
\end{eqnarray*}
and hence, for any fixed $\varepsilon>0$,
\begin{eqnarray*}
\prob(Z_n>m) &\le&
\prob\bigl(W_n^{(k)}>me^{-\varepsilon(n-k)}(1-e^{-\varepsilon})
\mbox{ for some }k\le n\bigr)\\
&=& \exptn\bigl[\prob\bigl(W_n^{(k)}>me^{-\varepsilon(n-k)}(1-e^{-\varepsilon})
\mbox{ for some }k\le n\mid\mathcal A\bigr)\bigr].
\end{eqnarray*}
Splitting the area of integration into two parts, we get the following upper bound
\begin{eqnarray}\label{upperboundW}
\prob(Z_n>m)
&\le& \prob(S_{k,n-1}>\log m-\sqrt{\log m}-2\varepsilon(n-k) 
\mbox{ for some }k\in[0,n-1])\nonumber\\
&& + \exptn\bigl[\prob\bigl(W_n^{(k)}>me^{-\varepsilon(n-k)}(1-e^{-\varepsilon})
\mbox{ for some }k\le n\mid\mathcal A\bigr);\nonumber\\
&&\hspace{20mm}S_{k,n-1}\le\log m-\sqrt{\log m}-2\varepsilon(n-k) 
\mbox{ for all }k\in[0,n-1]\bigr].
\end{eqnarray}

Using \eqref{stab.cond} and strong subexponentiality of  $F$
we conclude that
\begin{eqnarray}\label{upper.1}
\lefteqn{\prob(S_{k,n-1}+2\varepsilon(n-k)>\log m-\sqrt{\log m}
\mbox{ for some }k\in[0,n-1])}\nonumber\\
&&\hspace{40mm}\sim\ \frac{1}{a-2\varepsilon}
\int_{\log m-\sqrt{\log m}}^{\log m-\sqrt{\log m}+n(a-2\varepsilon)}
\overline F(x)dx
\end{eqnarray}
as $m\to\infty$ uniformly for all $n$, see \cite{Dima2002} and also
\cite{FKZ}, Theorem 5.3.

Further, by the Markov inequality,
\begin{eqnarray*}
\prob\bigl(W_n^{(k)}>me^{-\varepsilon(n-k)}(1-e^{-\varepsilon})\mid\mathcal A\bigr) &\le&
\frac{\exptn(W_n^{(k)}\mid\mathcal A)}{me^{-\varepsilon(n-k)}(1-e^{-\varepsilon})}\\
&=& \frac{e^{S_{k,n-1}}}{me^{-\varepsilon(n-k)}(1-e^{-\varepsilon})}.
\end{eqnarray*}
Hence, on the event $\{S_{k,n-1}\le\log m-\sqrt{\log m}-2\varepsilon(n-k) 
\mbox{ for all }k\in[0,n-1]\}$
we have
\begin{eqnarray*}
\prob\bigl(W_n^{(k)}>me^{-\varepsilon(n-k)}(1-e^{-\varepsilon})\mid\mathcal A\bigr)
&\le& \frac{e^{-\varepsilon(n-k)}}{e^{\sqrt{\log m}}(1-e^{-\varepsilon})},
\end{eqnarray*}
which implies that
\begin{eqnarray}\label{upper.2}
\lefteqn{\exptn\bigl[\prob\bigl(W_n^{(k)}>m(1-\varepsilon)^{n-k}\varepsilon
\mbox{ for some }k\le n\mid\mathcal A\bigr);}\nonumber\\
&&\hspace{40mm}S_{k,n-1}\le\log m-\sqrt{\log m}-2\varepsilon(n-k) 
\mbox{ for all }k\in[0,n-1]\bigr]
\nonumber\\
&&\hspace{20mm}\le\ \frac{1}{e^{\sqrt{\log m}}(1-e^{-\varepsilon})}
\sum_{k=0}^\infty e^{-\varepsilon(n-k)}\nonumber\\
&&\hspace{20mm}=\ \frac{1}{e^{\sqrt{\log m}}(1-e^{-\varepsilon})^2}.
\end{eqnarray}
Substituting \eqref{upper.1} and \eqref{upper.2} into \eqref{upperboundW}, 
we deduce that, uniformly for all $n\ge 1$,
\begin{eqnarray*}
\prob(Z_n>m)
&\le& \frac{1+o(1)}{a-2\varepsilon}
\int_{\log m-\sqrt{\log m}}^{\log m-\sqrt{\log m}+na} \overline F(x)dx
 + \frac{1}{e^{\sqrt{\log m}}(1-e^{-\varepsilon})^2}.
\end{eqnarray*}
By the condition \eqref{cond.sqrt}, $\overline F(\log m-\sqrt{\log m})\sim\overline F(\log m)$
and $\overline F(\log m)e^{\sqrt{\log m}}\to\infty$ as $m\to\infty$, hence
\begin{eqnarray*}
\prob(Z_n>m)
&\le& \frac{1+o(1)}{a-2\varepsilon}
\int_{\log m}^{\log m+na} \overline F(x)dx,
\end{eqnarray*}
uniformly for all $n\ge 1$. Due to the arbitrary choice of $\varepsilon>0$, 
the proof of the upper bound \eqref{eq:stat1.eq} is complete.

The above arguments can be streamlined if we made use of the link
\eqref{perp} to stochastic difference equations.
Indeed, conditioning on the environment leads to 
\begin{eqnarray*}
\prob(Z_n>m) &=& \exptn\bigl[\prob(Z_n>m\mid\mathcal A)\bigr]\\
&\le& \prob\bigl[\exptn(Z_n\mid\mathcal A)>me^{-\sqrt{\log m}}\bigr]\\
&& + \exptn\bigl[\prob\bigl(Z_n>m\mid\mathcal A\bigr);
\ \exptn(Z_n\mid\mathcal A)\le me^{-\sqrt{\log m}}\bigr].
\end{eqnarray*}
For the first term on the right hand side we apply the asymptotics \eqref{perp1}.
To estimate of the second term, we can apply the Markov inequality to get
\begin{eqnarray*}
\prob\bigl(Z_n>m\mid\mathcal A\bigr) &\le&
\frac{\exptn(Z_n\mid\mathcal A)}{m}\\
&\le& \frac{me^{-\sqrt{\log m}}}{m}\ =\ e^{-\sqrt{\log m}}
\end{eqnarray*}
on the event $\exptn(Z_n\mid\mathcal A)\le me^{-\sqrt{\log m}}$
which completes the proof.

The proof of the stationary upper bound \eqref{eq:stat2.eq}
follows similar arguments with initial upper bound
\begin{eqnarray*}
\prob(Z>m) &=& \lim_{n\to\infty}\prob(Z_n>m)\\
&\le& \lim_{n\to\infty}\prob\bigl[\exptn(Z_n\mid\mathcal A)>me^{-\sqrt{\log m}}\bigr]\\
&& + \lim_{n\to\infty}\exptn\bigl[\prob\bigl(Z_n>m\mid\mathcal A\bigr);
\ \exptn(Z_n\mid\mathcal A)\le me^{-\sqrt{\log m}}\bigr].
\end{eqnarray*}
and further use of the asymptotics \eqref{perp2} instead of \eqref{perp1}
which is valid due to subexponentiality of the integrated tail distribution $F_I$.
The proof of Theorem \ref{thm:tail:generation:size.upper} is complete.

%

\section{Proof of the principle of a single atypical environment,
Theorem~\ref{thm:PSLE}}
\label{sec:PSLE}

As follows from the arguments presented in Section \ref{sec:lower},
for any fixed $c$ and $\varepsilon>0$,
\begin{eqnarray*}
\prob\biggl(\bigcup_{k=0}^{n-1} E_n^{(k)}(m)\biggr) &\sim&
\frac{1}{a+\varepsilon}
\int_{\log m}^{\log m+(a+\varepsilon)n} \overline F(x)dx\\
&\ge& \frac{1}{a+\varepsilon} \int_{\log m}^{\log m+an} \overline F(x)dx
\end{eqnarray*}
and the event presented on the left hand side implies $Z_n>m$ with high probability, that is,
\begin{eqnarray*}
\prob\biggl(Z_n>m\ \Big|\ \bigcup_{k=0}^{n-1} E_n^{(k)}(m)\biggr) &\to& 1
\quad\mbox{as }m\to\infty\mbox{ uniformly for all }n.
\end{eqnarray*}
Then the equality
\begin{eqnarray*}
\prob\biggl(\bigcup_{k=0}^{n-1} E_n^{(k)}(m)\ \Big|\ Z_n>m\biggr) &=&
\prob\biggl(Z_n>m\ \Big|\ \bigcup_{k=0}^{n-1} E_n^{(k)}(m)\biggr)
\frac{\prob\Bigl(\bigcup_{k=0}^{n-1} E_n^{(k)}(m)\Bigr)}{\prob(Z_n>m)}
\end{eqnarray*}
and Theorem \ref{thm:tail:generation:size.upper} imply that
\begin{eqnarray*}
\lim_{m\to\infty}\inf_{n}
\prob\biggl(\bigcup_{k=0}^{n-1} E_n^{(k)}(m)\ \Big|\ Z_n>m\biggr) &\ge&
\frac{a}{a+\varepsilon}.
\end{eqnarray*}
Letting $\varepsilon\downarrow 0$ concludes the proof.

\section{Related models}
\label{sec:extensions}

The techniques developed in this paper may be applied
to analysing a variety of similar models. We mention here a few of them.

{\bf Random-size immigration.} One may replace size-1 immigration by a
{\it random-size-im\-migra\-tion} where random sizes are i.i.d. and
independent of everything else, with a common light-tailed distribution 
(or, more generally, the sizes may be stochastically bounded by a random variable
with a light-tailed distribution).

A branching process $\{\widehat{Z}_n, n\ge 0\}$ with {\it state-dependent size-1 immigration} is a particular case here: an immigrant arrives only when the previous generation produces no offspring:
\begin{align*}
{\widehat Z}_{n+1}= \sum_{i=1}^{\max (1,{\widehat Z}_n)} B_{n+1,i}, \quad n\ge 0.
\end{align*}
Clearly, $\widehat{Z}_n \le Z_n$ a.s., for any $n$. Moreover, one can show that, for each $n$, the low bounds for $\prob(Z_n>m)$
and $\prob(\widehat{Z}_n>m)$ are asymptotically equivalent.
Then, in particular, the statement of Theorem \ref{thm:tail:fixed:generation:size} stays valid with $\widehat{Z}_n$ in place of $Z_n$.

{\bf Continuous-space analogue.}
Instead of the recursion \eqref{Zn},
one may consider a ``continuous-space'' recursion of the form
\begin{align*}
Z_{n+1} = Y_{n+1} + \int_0^{Z_n} dB_{n+1}(t)
\end{align*}
where $B_n$ are  subordinators with a light-tailed distribution of the
Levy measure (that depends on random parameters) and $\{Y_n\}$ are i.i.d. 
``innovations'' with a light-tailed distribution. 
A similar problem for a branching process  with immigration, 
but without random environment has been studied in a recent paper 
by \cite{FM}.


\end{document}